\tikzstyle{vertex}=[circle, draw, inner sep=0pt, minimum size=6pt] 
\newcommand{\vertex}{\node[vertex]}
\newtheorem{Thm}{Theorem}
\newtheorem{Lem}[Thm]{Lemma}
\newtheorem{Prop}[Thm]{Proposition}
\newtheorem{Rem}[Thm]{Remark}
\newtheorem{Cor}[Thm]{Corollary}
\begin{document}
\title{On $(1,2)$-step competition graphs of bipartite tournaments}

\author{\small
\textsc{Jihoon CHOI}
\footnote{Department of Mathematics Education,
Seoul National University, Seoul 151-742, Korea.
\textit{E-mail}: \texttt{gaouls@snu.ac.kr}}
\and
\small \textsc{Soogang EOH}
\footnote{Department of Mathematics Education,
Seoul National University, Seoul 151-742, Korea.
\textit{E-mail}: \texttt{mathfish@snu.ac.kr}}
\and
\small\textsc{Suh-Ryung KIM}
\footnote{Department of Mathematics Education,
Seoul National University, Seoul 151-742, Korea.
\textit{E-mail}: \texttt{srkim@snu.ac.kr}}
\and
\small\textsc{So Jung LEE}
\footnote{Department of Mathematics Education,
Seoul National University, Seoul 151-742, Korea.
\textit{E-mail}: \texttt{sj023@snu.ac.kr}}}
\date{}
\maketitle
\begin{abstract}
In this paper, we study $(1,2)$-step competition graphs of bipartite tournaments. A bipartite tournament means an orientation of a complete bipartite graph. We show that the $(1,2)$-step competition graph of a bipartite tournament has at most one non-trivial component or consists of exactly two complete components of size at least three and, especially in the former, the diameter of the nontrivial component is at most three if it exists.
Based on this result, we show that, among  the connected non-complete graphs which are triangle-free or the cycles of which are edge-disjoint, $K_{1,4}$ is the only graph that can be represented as the $(1,2)$-step competition graph of a bipartite tournament.
We also completely characterize a complete graph and the disjoint union of two complete graphs, respectively, which can be represented as the $(1,2)$-step competition graph of a bipartite tournament.
Finally we present the maximum number of edges and the minimum number of edges which the $(1,2)$-step competition graph of a bipartite tournament might have.
\end{abstract}
\noindent
{\bf Keywords:} bipartite tournament; orientation of complete bipartite graph; $(1,2)$-step competition graphs; $(1,2)$-step competition-realizable.

\section{Introduction}
In this paper, a graph means a simple graph.
For vertices $x$ and $y$ in a digraph $D$, $d_D(x,y)$ denotes the number of arcs in a shortest directed path from $x$ to $y$ in $D$ if it exists.
For positive integers $i$ and $j$, the {\em $(i,j)$-step competition graph} of a digraph $D$, denoted by $C_{i,j}(D)$, is a graph on $V(D)$ where $uv \in E(C_{i,j}(D))$ if and only if there exists a vertex $w \neq u, v$ such that either $d_{D-v}(u,w) \le i$ and $d_{D-u}(v,w) \le j$ or $d_{D-u}(v,w) \le i$ and $d_{D-v}(u,w) \le j$.
The $(1,1)$-step competition graph of a digraph $D$ is the competition graph of $D$. Given a digraph $D$,
the \emph{competition graph} of $D$, denoted by $C(D)$,
is the graph having vertex set $V(D)$ and edge set $\{ uv \mid (u,w) \in A(D), (v,w) \in A(D) \text{ for some } w \in V(D)\}$.
Cohen~\cite{cohen} introduced the notion of competition graph while studying predator-prey concepts in ecological food webs. Cohen's empirical observation that real-world competition graphs are usually interval graphs had led to a great deal of research on the structure of competition graphs and on the relation between the structure of digraphs and their corresponding competition graphs. In the same vein, various variants of competition graph have been introduced and studied, one of which is the notion of $(i,j)$-step competition introduced by Factor and Merz~\cite{tournament}  (see \cite{Cable,m-step,p-competition,RobertsSheng, Scott} for other variants of competition graph).   For recent work on this topic, see~\cite{KA, order, kim1, Kuhl, LiChang, McKay}.

Factor and Merz~\cite{tournament} studied the $(1,2)$-step competition graphs of tournaments. Zhang and Li~\cite{ZR} and Zhang~{\it et al.}~\cite{round} studied the $(1,2)$-step competition graphs of round digraphs. On the other hand, Kim~{\it et al.}~\cite{Kim} studied the competition graphs of orientations of complete bipartite graphs.  In this paper, we study the $(1,2)$-step competition graphs of orientations of complete bipartite graphs, which is a natural extension of their results.

An orientation of a complete bipartite graph is sometimes called a {\it bipartite tournament} and we use whichever of the two terms is more suitable for a given situation throughout this paper.

In Section~2, we derive useful properties of $(1,2)$-step competition graphs of bipartite tournaments.
In Section~3, based on the results obtained in Section~2, we show that, among the  connected non-complete graphs which are triangle-free or the cycles of which are edge-disjoint, $K_{1,4}$ is the only graph that can be represented as the $(1,2)$-step competition graph of a bipartite tournament.
We also completely characterize a complete graph and the disjoint union of complete graphs, respectively, which can be represented as the $(1,2)$-step competition graph of a bipartite tournament.
In Section~4, we present the maximum number of edges and the minimum number of edges which the $(1,2)$-step competition graph of an orientation of $K_{m,n}$ might have.

\section{Properties of $(1,2)$-step competition graphs of bipartite tournaments}

For a digraph $D$, we say that vertices $u$ and $v$ in $D$ {\em $(1,2)$-compete} provided there exists a vertex $w$ distinct from $u, v$ and satisfying one of the following:
\begin{itemize}
\item there exist an arc $(u,w)$ and a directed $(v,w)$-walk of length $2$ not traversing $u$;
\item there exist a directed $(u,w)$-walk of length $2$ not traversing $v$ and an arc $(v,w)$.
\end{itemize}
We call $w$ in the above definition a \emph{$(1,2)$-step common out-neighbor} of $u$ and $v$.
It is said that two vertices \emph{compete} if they have a common out-neighbor.
Thus, $uv \in E(C_{1,2}(D))$ provided $u$ and $v$ compete or $(1,2)$-compete.

An edge in the $(1,2)$-step competition graph of a digraph $D$ is said to \emph{be induced by competition} (resp.\ \emph{$(1,2)$-step competition}) if there exists a common out-neighbor (resp.\ $(1,2)$-step common out-neighbor) of the ends of the edge in $D$.

From the definition of the $(1,2)$-step competition graph of a digraph, we make the following simple but useful observation.

\begin{Prop}\label{prop:deg}
In the $(1,2)$-step competition graph of a digraph $D$, a non-isolated vertex has an out-neighbor in $D$.
\end{Prop}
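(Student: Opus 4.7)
The plan is to unwind the definition of adjacency in $C_{1,2}(D)$ and show that in every scenario producing an edge at $v$, the vertex $v$ must have an out-neighbor in $D$.

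Let $v$ be non-isolated in $C_{1,2}(D)$. Then there exists some vertex $u \neq v$ with $uv \in E(C_{1,2}(D))$, so $u$ and $v$ either compete or $(1,2)$-compete. I would split the argument into these two cases.

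In the competition case, by definition there is a common out-neighbor $w$ of $u$ and $v$; in particular $(v,w) \in A(D)$, giving $v$ an out-neighbor. In the $(1,2)$-competition case, by the definition recalled just before the statement, there is a vertex $w \neq u,v$ satisfying one of two conditions. If the arc $(v,w)$ exists (second condition), then $w$ is already an out-neighbor of $v$. Otherwise (first condition) there is a directed $(v,w)$-walk of length $2$ avoiding $u$; writing this walk as $v \to x \to w$, the arc $(v,x)$ witnesses that $x$ is an out-neighbor of $v$ in $D$.

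This exhausts all the ways in which $v$ can acquire an edge in $C_{1,2}(D)$, so in every case $v$ has at least one out-neighbor in $D$. There is no real obstacle here; the statement is essentially a bookkeeping observation extracted from the definition, and the only thing to be careful about is not to overlook the subcase in which the $(1,2)$-competition edge comes from a length-$2$ walk starting at $v$ rather than from a direct arc out of $v$.
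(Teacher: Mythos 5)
Your proof is correct and is exactly the definition-unwinding the paper has in mind: the paper states this proposition without proof as an immediate observation, and your case analysis (common out-neighbor, direct arc $(v,w)$, or length-$2$ walk $v\to x\to w$) simply makes that observation explicit and covers all cases.
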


We present results which will play a key role throughout this paper.

\begin{Prop} \label{prop:structure}
Let $u$ and $v$ be adjacent vertices in the $(1,2)$-step competition graph of an orientation $D$ of $K_{m,n}$.
Then $u$ and $v$ belong to the same partite set if and only if they compete in $D$.
\end{Prop}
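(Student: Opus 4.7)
The plan is to exploit the bipartite structure of $D$ via a parity/length argument on directed walks. Since $D$ orients $K_{m,n}$, every arc crosses between the two partite sets $X$ and $Y$. Consequently, every directed walk of odd length has its endpoints in different partite sets, while every directed walk of even (positive) length has its endpoints in the same partite set. This is the only fact I will need.

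For the backward implication, suppose $u$ and $v$ compete in $D$, with common out-neighbor $w$. Then $(u,w)$ and $(v,w)$ are both arcs, so $w$ lies in the opposite partite set from each of $u$ and $v$. Hence $u$ and $v$ must lie in the same partite set.

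For the forward implication, I would argue by contradiction. Assume $u$ and $v$ lie in the same partite set but do not compete in $D$; by hypothesis they are adjacent in $C_{1,2}(D)$, so they must $(1,2)$-compete. By symmetry there is a vertex $w \neq u,v$ with an arc $(u,w)$ and a directed $(v,w)$-walk of length $2$ (avoiding $u$). The arc $(u,w)$ places $w$ in the partite set opposite to $u$. But the length-$2$ walk from $v$ to $w$ keeps $v$ and $w$ in the \emph{same} partite set, so $w$ lies in the partite set of $v$. Since $u$ and $v$ share a partite set, these two conclusions contradict each other, proving that $u$ and $v$ must in fact compete.

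There is no genuine obstacle here; the whole argument is a two-line parity observation about walks in a bipartite digraph, applied once in each direction. The only thing to be slightly careful about is handling the symmetric case in the definition of $(1,2)$-competition, but this is immediate by swapping the roles of $u$ and $v$.
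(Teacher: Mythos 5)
Your proof is correct and follows essentially the same route as the paper's: the backward direction uses the fact that a common out-neighbor must lie in the opposite partite set, and the forward direction rules out $(1,2)$-competition between vertices of the same partite set by the parity of walk lengths in a bipartite digraph (the paper phrases this as ``only in Case~(i) do $u$ and $v$ belong to the same partite set,'' which is exactly your parity observation made explicit).
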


\begin{proof}
If $u$ and $v$ compete, then they are adjacent by definition of $(1,2)$-step competition graph. As $D$ is an orientation of $K_{m,n}$, $u$ and $v$ belong to the same partite set of $K_{m,n}$.
To show the converse, suppose that two vertices $u$ and $v$ are adjacent in $C_{1,2}(D)$ and belong to the same partite set. Since $u$ and $v$ are adjacent in $C_{1,2}(D)$, there exists a vertex $w$ in $D$ such that $w$ is a common out-neighbor or a $(1,2)$-step common out-neighbor of $u$ and $v$ by definition, that is, $w$ satisfies one of the following:
\begin{itemize}
\item[(i)] there are arcs $(u,w)$ and $(v,w)$;
\item[(ii)] there exist an arc $(u,w)$ and a directed $(v,w)$-walk of length $2$ not traversing $u$;
\item[(iii)] there exist a directed $(u,w)$-walk of length $2$ not traversing $v$ and an arc $(v,w)$.
\end{itemize}
Only in Case (i), $u$ and $v$ belong to the same partite set of $K_{m,n}$. Therefore $u$ and $v$ compete.
\end{proof}
\noindent The following corollary is the contrapositive of Proposition~\ref{prop:structure}.
\begin{Cor}\label{cor:12compete}
Let $u$ and $v$ be adjacent vertices in the $(1,2)$-step competition graph of an orientation $D$ of $K_{m,n}$.
Then $u$ and $v$ belong to distinct partite sets if and only if they $(1,2)$-compete in $D$.
\end{Cor}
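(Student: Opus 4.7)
The plan is to deduce the corollary directly from Proposition~\ref{prop:structure} and the definition of the $(1,2)$-step competition graph, since an edge $uv$ in $C_{1,2}(D)$ arises exactly when $u$ and $v$ compete or $(1,2)$-compete in $D$. I would split the biconditional into its two directions.

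For the forward direction, assume $u$ and $v$ are adjacent in $C_{1,2}(D)$ and lie in distinct partite sets of $K_{m,n}$. The contrapositive of Proposition~\ref{prop:structure} rules out competition, so the adjacency of $u$ and $v$ in $C_{1,2}(D)$ forces them to $(1,2)$-compete. For the backward direction, assume $u$ and $v$ are adjacent and $(1,2)$-compete via some vertex $w$. The case analysis from the proof of Proposition~\ref{prop:structure} shows that only cases (ii) and (iii) are compatible with $(1,2)$-competition, and in both cases the bipartite structure of $D$ forces $u$ and $v$ into different partite sets: every arc of $D$ joins the two partite sets, while every directed walk of length~$2$ in $D$ has its endpoints in the same partite set. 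Thus, for instance in case (ii), the arc $(u,w)$ places $u$ and $w$ in opposite parts while the length-$2$ walk $v\to x\to w$ places $v$ and $w$ in the same part, yielding $u$ and $v$ in distinct parts; case (iii) is symmetric.

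I do not anticipate any obstacle; the corollary is essentially a repackaging of Proposition~\ref{prop:structure} via the observation that in an orientation of $K_{m,n}$ the endpoints of every directed walk alternate between the two partite sets according to the parity of its length. The whole argument should fit in just a few lines.
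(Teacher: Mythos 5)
Your proof is correct and follows essentially the same route as the paper, which simply declares the corollary to be the contrapositive of Proposition~\ref{prop:structure} combined with the definition of adjacency in $C_{1,2}(D)$. You are in fact slightly more careful than the paper in the backward direction, where you explicitly verify via the parity of directed walks that $(1,2)$-competing vertices must lie in distinct partite sets; this is exactly the observation implicit in the paper's case analysis.
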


By Corollary~\ref{cor:12compete}, two vertices belonging to distinct partite sets of $K_{m,n}$ can only $(1,2)$-compete in any of its orientations.
The following theorem characterizes a pair of vertices which belong to distinct partite sets of $K_{m,n}$ and $(1,2)$-compete.
\begin{Thm}\label{thm:notcompete}
Let $u$ and $v$ be vertices belonging to distinct partite sets of a bipartite tournament $D$.
Then $u$ and $v$ $(1,2)$-compete in $D$ if and only if $u$ (resp.\ $v$) has an out-neighbor different from $v$ (resp.\ $u$). 
\end{Thm}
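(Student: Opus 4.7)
The plan is to handle the two directions separately, with the forward direction being essentially a definition-chase and the backward direction exploiting the completeness of the bipartite structure to glue the two required out-neighbors into a length-$1$ plus length-$2$ configuration.

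For the forward direction, I would assume $u$ and $v$ $(1,2)$-compete via a vertex $w \neq u,v$ and work through the two cases of the definition. In the case where there is an arc $(u,w)$ and a directed $(v,w)$-walk of length $2$ not traversing $u$, we read off $w$ as an out-neighbor of $u$ (and $w \neq v$), while the middle vertex of the length-$2$ walk from $v$ is an out-neighbor of $v$ distinct from $u$ (since the walk avoids $u$). The symmetric case is analogous. This gives both of the required out-neighbors immediately.

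For the backward direction, let $X$ and $Y$ be the partite sets with $u \in X$ and $v \in Y$. Let $u'$ be an out-neighbor of $u$ with $u' \neq v$ and $v'$ an out-neighbor of $v$ with $v' \neq u$; then $u' \in Y \setminus \{v\}$ and $v' \in X \setminus \{u\}$, so in particular $u'$ and $v'$ lie in opposite partite sets, and since $D$ is a bipartite tournament there is an arc between them. If the arc is $(u',v')$, then $u \to u' \to v'$ is a directed walk of length $2$ avoiding $v$ (because $u',v' \neq v$) and $(v,v')$ is an arc, so $v'$ is a $(1,2)$-step common out-neighbor; note $v' \neq u,v$ as required. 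If instead the arc is $(v',u')$, then symmetrically $v \to v' \to u'$ is a length-$2$ walk avoiding $u$ and $(u,u')$ is an arc, making $u'$ a $(1,2)$-step common out-neighbor.

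The argument is essentially routine once the partition constraints are tracked carefully; the only step requiring any thought is confirming that in each case the chosen length-$2$ walk really avoids the correct vertex, which follows automatically from the fact that the endpoints involved lie in the opposite partite set from the one to be avoided. I do not anticipate any serious obstacle.
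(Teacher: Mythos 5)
Your proof is correct and follows essentially the same route as the paper: the forward direction is a direct reading of the definition (which the paper dismisses as obvious), and the backward direction uses the arc that must exist between the two chosen out-neighbors $u'$ and $v'$ (the paper's $x$ and $y$) to build the required length-$2$ walk. No issues.
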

\begin{proof}
Let $(V_1,V_2)$ be a bipartition of $D$.
Without loss of generality, we may assume that $u \in V_1$ and $v \in V_2$.
The `only if' part is obviously true. To show the `if' part, suppose that $x$ is an out-neighbor of $u$ different from $v$ and $y$ is an out-neighbor of $v$ different from $u$.
Since $D$ is a bipartite tournament, either $(x,y)$ or $(y,x)$ is an arc in $D$.
If $(x,y)$ (resp.\ $(y,x)$) is an arc, then there exists a directed walk $u \rightarrow x \rightarrow y$ (resp.\ $v \rightarrow y \rightarrow x$) and therefore $u$ and $v$ $(1,2)$-compete.
\end{proof}
\noindent The following corollary is to be quoted frequently in the rest of this paper.

\begin{Cor}\label{cor:edge}
Let $D$ be a bipartite tournament.
Suppose that $(u,v) \in A(D)$ and $d^+(v) \ge 1$.
Then $u$ and $v$ are adjacent in the $(1,2)$-step competition graph of $D$ if and only if $u$ has at least two out-neighbors in $D$.
\end{Cor}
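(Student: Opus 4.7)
The plan is to chain the two preceding results and simplify, using the hypothesis $(u,v)\in A(D)$ to get rid of one of the two symmetric conditions that appear in Theorem~\ref{thm:notcompete}.

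First I would observe that, since $(u,v)\in A(D)$ and $D$ is an orientation of a complete bipartite graph, $u$ and $v$ lie in distinct partite sets. Hence Corollary~\ref{cor:12compete} applies: $u$ and $v$ are adjacent in $C_{1,2}(D)$ if and only if they $(1,2)$-compete in $D$.

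Next I would invoke Theorem~\ref{thm:notcompete}, which says that two vertices in distinct partite sets $(1,2)$-compete in $D$ if and only if $u$ has an out-neighbor different from $v$ \emph{and} $v$ has an out-neighbor different from $u$. The second of these two conditions is automatic under the hypotheses of the corollary: because $D$ is a bipartite tournament and $(u,v)\in A(D)$, the reverse arc $(v,u)$ cannot be present, so $u\notin N^+(v)$; combined with $d^+(v)\ge 1$, this already guarantees that $v$ has an out-neighbor different from $u$. Thus the characterization collapses to the single condition that $u$ have an out-neighbor different from $v$.

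The final step is just bookkeeping: since $v$ is itself an out-neighbor of $u$ (by $(u,v)\in A(D)$), requiring $u$ to have an out-neighbor other than $v$ is equivalent to $d^+(u)\ge 2$, i.e.\ $u$ has at least two out-neighbors in $D$. There is no real obstacle here; the only point that needs care is remembering that in a bipartite tournament the arc $(u,v)$ forbids $(v,u)$, so that $u$ cannot sneak back in as an out-neighbor of $v$ and the asymmetry between the two conditions of Theorem~\ref{thm:notcompete} becomes genuine.
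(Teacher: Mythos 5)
Your proposal is correct and follows essentially the same route as the paper: reduce adjacency to $(1,2)$-competition via Corollary~\ref{cor:12compete}, then note that the hypothesis $d^+(v)\ge 1$ together with the absence of the arc $(v,u)$ makes the condition on $v$ in Theorem~\ref{thm:notcompete} automatic, so only the condition $d^+(u)\ge 2$ remains. The paper's proof is the same argument, merely stated more tersely.
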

\begin{proof}
Since $(u,v)$ is an arc in $D$, $u$ and $v$ belong to distinct partite sets of $D$.
For the same reason, $u$ has an out-neighbor different from $v$ if and only if $u$ has at least two out-neighbors in $D$.
Moreover, by the hypothesis, $v$ has an out-neighbor.
Since $(u,v)$ is an arc in $D$, it is different from $u$.
Therefore $u$ (resp.\ $v$) has an out-neighbor different from $v$ (resp.\ $u$) if and only if $u$ has at least two out-neighbors in $D$.
Thus the corollary follows from Theorem~\ref{thm:notcompete}.
\end{proof}

\begin{Cor}\label{cor:leasttwo}
Let $G$ be the $(1,2)$-step competition graph of an orientation $D$ of $K_{m,n}$ with a bipartition $(V_1,V_2)$.
Then each vertex has outdegree at least two in $D$ if and only if the edges of $G$ not belonging to $G[V_1] \cup G[V_2]$ induce $K_{m,n}$.
\end{Cor}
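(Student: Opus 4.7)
The plan is to combine Corollary~\ref{cor:12compete} with Theorem~\ref{thm:notcompete}: the edges of $G$ not belonging to $G[V_1]\cup G[V_2]$ are exactly the edges joining $V_1$ and $V_2$, and by Corollary~\ref{cor:12compete} they correspond precisely to pairs $(u,v)\in V_1\times V_2$ that $(1,2)$-compete in $D$. Therefore, saying that these edges induce $K_{m,n}$ is the same as saying that every $u\in V_1$ and every $v\in V_2$ $(1,2)$-compete in $D$. Combined with Theorem~\ref{thm:notcompete}, this rephrases the conclusion as: for every $u\in V_1$ and every $v\in V_2$, $u$ has an out-neighbor in $D$ different from $v$ and $v$ has an out-neighbor in $D$ different from $u$. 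The job is then to see why this condition is equivalent to every vertex having out-degree at least $2$.

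For the forward direction, I would simply observe that, because $D$ is an orientation of $K_{m,n}$, the out-neighbors of any $u\in V_1$ lie in $V_2$ and the out-neighbors of any $v\in V_2$ lie in $V_1$; if $d^+(u)\ge 2$ for every vertex, then for any fixed $v\in V_2$ the vertex $u$ still has an out-neighbor distinct from $v$, and symmetrically for $v$. Theorem~\ref{thm:notcompete} then gives that $u$ and $v$ $(1,2)$-compete, so by Corollary~\ref{cor:12compete} the edge $uv$ lies in $G$ and is not in $G[V_1]\cup G[V_2]$. Hence the edges of $G$ not in $G[V_1]\cup G[V_2]$ are precisely the edges of $K_{m,n}$.

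For the reverse direction, I would argue by contradiction. Suppose the edges of $G$ not in $G[V_1]\cup G[V_2]$ induce $K_{m,n}$ but some vertex, say $u\in V_1$, has $d^+(u)\le 1$. If $d^+(u)=0$, then for any $v\in V_2$ (such a $v$ exists since the induced bipartite graph is $K_{m,n}$, which in particular needs $n\ge 1$, and since $u$ is involved in such an edge we have $n\ge 1$), $u$ has no out-neighbor different from $v$, so by Theorem~\ref{thm:notcompete} the pair $u,v$ does not $(1,2)$-compete, contradicting $uv\in E(G)$ via Corollary~\ref{cor:12compete}. If $d^+(u)=1$ with unique out-neighbor $w\in V_2$, then choosing $v=w$ gives again that $u$ has no out-neighbor different from $v$, the same contradiction. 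Thus every vertex in $V_1$ has out-degree at least $2$; the symmetric argument applies to $V_2$.

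The main (very mild) obstacle is only to check that Theorem~\ref{thm:notcompete} is applied to the correct partite-set configuration and that the corner cases $m=0$ or $n=0$ are handled; these degenerate trivially since then one of the sides is empty and both statements in the equivalence become vacuously true. No calculation beyond this is needed.
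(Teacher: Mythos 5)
Your proof is correct and follows essentially the same route as the paper: both directions reduce to Theorem~\ref{thm:notcompete} via the observation that out-neighbors of a vertex lie entirely in the opposite partite set, and the reverse direction derives a contradiction from a vertex of outdegree at most one (your handling of the outdegree-zero case via Theorem~\ref{thm:notcompete} directly, rather than via isolation as in the paper's appeal to Proposition~\ref{prop:deg}, is an immaterial variation).
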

\begin{proof}
To show the `only if' part, take two vertices $u$ and $v$ in distinct partite sets of $K_{m,n}$. Then the outdegree of each of $u$ and $v$ is at least two, so $u$ (resp.\ $v$) has an out-neighbor different from $v$ (resp.\ $u$).
Thus, by Theorem~\ref{thm:notcompete}, $u$ and $v$ $(1,2)$-compete and therefore they are adjacent in $G$.
Since $u$ and $v$ are arbitrarily chosen, the edges of $G$ not belonging to $G[V_1] \cup G[V_2]$ induce $K_{m,n}$.
To show the `if' part by contradiction, suppose that there exists a vertex $v$ of outdegree at most one.
If $v$ has no out-neighbor, then $v$ is isolated in $G$ and we reach a contradiction.
Thus $v$ has exactly one out-neighbor, say $u$, and so, by Theorem~\ref{thm:notcompete}, $u$ and $v$ do not $(1,2)$-compete in $D$.
Hence $u$ and $v$ are not adjacent in $G$ and we reach a contradiction. This completes the proof.
\end{proof}

\begin{Lem}\label{lem:neighbor}
Let $D$ be a bipartite tournament with a bipartition $(V_1, V_2)$ and let $uv$ be an edge of $C_{1,2}(D)$ where $u \in V_1$ and $v \in V_2$.
Then $u$ has a neighbor in $V_1$ or $v$ has a neighbor in $V_2$ in $C_{1,2}(D)$.
\end{Lem}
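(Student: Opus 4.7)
The plan is to exploit the $(1,2)$-step competition witness for the edge $uv$ and extract from it a second pair of vertices that lie in a common partite set and share an out-neighbor.

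Since $u \in V_1$ and $v \in V_2$ are in distinct partite sets, Corollary~\ref{cor:12compete} forces $u$ and $v$ to $(1,2)$-compete in $D$. So there is a vertex $w$, distinct from $u$ and $v$, witnessing this. By the definition of $(1,2)$-competition, either (a) $(u,w) \in A(D)$ and there is a directed $(v,w)$-walk $v \to x \to w$ of length $2$ not traversing $u$, or (b) $(v,w) \in A(D)$ and there is a directed $(u,w)$-walk $u \to y \to w$ of length $2$ not traversing $v$.

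In case (a), I would observe that since $u \in V_1$ and $(u,w)$ is an arc we have $w \in V_2$; then since $(v,x) \in A(D)$ with $v \in V_2$ we get $x \in V_1$; and $x \neq u$ by assumption. Hence $u$ and $x$ are two distinct vertices of $V_1$ with a common out-neighbor $w$, so they compete in $D$, and Proposition~\ref{prop:structure} (or the definition of $C_{1,2}(D)$) gives $ux \in E(C_{1,2}(D))$, which exhibits a neighbor of $u$ inside $V_1$. In case (b), the symmetric argument shows that $v$ and $y$ are distinct vertices of $V_2$ sharing the out-neighbor $w \in V_1$, so $vy \in E(C_{1,2}(D))$ and $v$ has a neighbor in $V_2$.

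There is no real obstacle beyond bookkeeping: one just has to track the partite sets of the intermediate vertex in each length-$2$ walk and verify that the three vertices involved in the new competition edge ($u,x,w$ in case (a); $v,y,w$ in case (b)) are pairwise distinct, which follows from the ``not traversing'' clause in the definition of $(1,2)$-competition together with the fact that $D$ is loopless. The lemma is therefore essentially an immediate structural consequence of Proposition~\ref{prop:structure} and Corollary~\ref{cor:12compete}.
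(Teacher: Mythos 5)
Your proof is correct and follows essentially the same route as the paper: invoke Corollary~\ref{cor:12compete} to get a $(1,2)$-step competition witness, then observe that the arc into the witness together with the first arc of the length-two walk exhibits an ordinary competition between $u$ (resp.\ $v$) and the intermediate vertex, which lies in the same partite set. The only cosmetic difference is notation; the paper labels the witness and intermediate vertex as $z$ and $w$ where you use $w$ and $x$ (or $y$).
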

\begin{proof}
Since $u$ and $v$ belong to distinct partite sets, $u$ and $v$ $(1,2)$-compete by Corollary~\ref{cor:12compete}.
Therefore there exist vertices $w$ in $V_1$ and $z$ in $V_2$ such that exactly one of the following is true:
 \begin{itemize}
 \item[(i)] $(u,z), (v,w), (w,z)$ are arcs in $D$;
 \item[(ii)] $(v,w), (u,z), (z,w)$ are arcs in $D$.
 \end{itemize}
If (i) is true, then $w$ becomes a neighbor of $u$ in $C_{1,2}(D)$ and if (ii) is true, then $z$ becomes a neighbor of $v$ in $C_{1,2}(D)$.
Hence the lemma holds.
\end{proof}

Let $G$ be the $(1,2)$-step competition graph of a bipartite tournament with a bipartition $(V_1, V_2)$.
For two edges $x_1x_2$ and $y_1y_2$ of $G$, if there exists a walk between one of $x_1,x_2$ and one of $y_1,y_2$, then we will say that {\em $x_1x_2$ and $y_1y_2$ are connected}.
It is easy to see that if edges $x_1x_2$ and $y_1y_2$ are connected, then any two vertices among $x_1$, $x_2$, $y_1$, $y_2$ are connected.
We say that the edge $x_1x_2$ \emph{links} $V_1$ and $V_2$ if either $x_1 \in V_1$ and $x_2 \in V_2$ or $x_1 \in V_2$ and $x_2 \in V_1$.
Therefore, if the edge $x_1x_2$ does not link $V_1$ and $V_2$, then both $x_1$ and $x_2$ belong to exactly one of $V_1$ and $V_2$.
In the case where the edge $x_1x_2$ does not link $V_1$ and $V_2$, we say that the edge $x_1x_2$ {\em belongs to $V_1$} (resp.\ $V_2$) in $G$ if both of $x_1$ and $x_2$ belong to $V_1$ (resp.\ $V_2$).

\begin{Thm}\label{thm:component}
Let $D$ be a bipartite tournament.
Then $C_{1,2}(D)$ has at most one non-trivial component or consists of exactly two complete components of size at least three.
\end{Thm}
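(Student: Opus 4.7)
The plan is to assume $G = C_{1,2}(D)$ has at least two non-trivial components and derive that it must then consist of exactly two complete components of size at least three. The argument proceeds by first forcing a low-out-degree vertex to exist, and then analyzing the resulting structure of each non-trivial component.

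First I would apply Corollary~\ref{cor:leasttwo}: if every vertex of $D$ had out-degree at least two, the cross edges of $G$ would induce $K_{m,n}$, so all non-isolated vertices of $G$ would lie in a single component. Under our assumption this fails, so some non-isolated vertex $u$ has out-degree exactly one; without loss of generality $u \in V_1$ with unique out-neighbor $w \in V_2$. By Corollary~\ref{cor:edge}, $u \not\sim w$ in $G$, but $u$ is non-isolated, so by Proposition~\ref{prop:structure} the vertex $u$ must have a neighbor $u' \in V_1$ with $u,u'$ competing; their common out-neighbor in $V_2$ is necessarily $w$.

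Iterating this reasoning and invoking Theorem~\ref{thm:notcompete} to rule out unwanted cross adjacencies, I would argue that the component $C_1$ containing $u$ takes the form $A \cup \{b\}$, where $A \subseteq V_1$ is the set of $V_1$-vertices whose unique out-neighbor in $D$ is $w$, and $b \in V_2$ is a single vertex whose out-neighbors in $D$ are exactly $A$ (each $(1,2)$-competition between $b$ and an element of $A$ running through $w$); the mirror form $\{a\} \cup B$ is also possible. A parallel analysis for a second non-trivial component $C_2 = A' \cup \{b'\}$ with its own ``hub'' $w'$ would then force $w = b'$ and $w' = b$ by compatibility of the arcs, pinning down $V_2 = \{b, b'\}$ and $V_1 = A \cup A'$ with $|A|,|A'| \geq 2$. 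One then checks directly that $G$ consists of exactly the two complete components $A \cup \{b\}$ and $A' \cup \{b'\}$, each of size at least three.

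The main obstacle lies in the structural step: ruling out mixed components of the form $A \cup B$ with $|A|,|B| \geq 2$ when a second non-trivial component is present, and excluding any further isolated or non-trivial components alongside $C_1$ and $C_2$. Both rely on careful applications of Theorem~\ref{thm:notcompete} and Corollary~\ref{cor:edge} to track exactly which cross-partite and in-partite adjacencies are forced by the arcs of $D$, together with the fact that any ``extra'' $V_1$- or $V_2$-vertex would either connect $C_1$ to $C_2$ (violating the component split) or fail to be isolated (violating the specific form of the two components).
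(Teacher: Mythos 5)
Your opening reduction is fine: if every vertex of $D$ had outdegree at least two, Corollary~\ref{cor:leasttwo} would make every vertex of $V_1$ adjacent to every vertex of $V_2$, so $G$ could not have two non-trivial components; hence some vertex has outdegree at most one. But the next step is wrong. From ``$u\in V_1$ has unique out-neighbor $w\in V_2$, $u\not\sim w$, and $u$ is non-isolated'' you conclude via Proposition~\ref{prop:structure} that $u$ must have a neighbor in $V_1$. Proposition~\ref{prop:structure} only tells you what an adjacency \emph{within} a partite set means; it does not force $u$'s neighbors into $V_1$. Indeed, by Theorem~\ref{thm:notcompete}, such a $u$ is adjacent to every $v\in V_2\setminus\{w\}$ that has an out-neighbor other than $u$ (note $(v,u)$ is then an arc, so this just says $d^+_D(v)\ge 2$). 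So an outdegree-one vertex can have many cross-partite neighbors and no neighbor in its own partite set, and the claimed normal form $C_1=A\cup\{b\}$ with a single ``hub'' collapses.

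Beyond that false step, the part you label the ``main obstacle'' --- ruling out mixed components $A\cup B$ with $|A|,|B|\ge 2$ and pinning down exactly two complete components --- is where essentially all of the work lies, and it is only gestured at. For comparison, the paper's proof does not classify components around low-outdegree vertices at all: it picks one edge from each of two non-trivial components, uses Lemma~\ref{lem:neighbor} to replace any edge linking $V_1$ and $V_2$ by an edge inside a partite set, and then, in the case where both chosen edges lie inside partite sets, shows first that the two edges cannot lie in different partite sets, and second that if both lie in $V_1$ then $|V_2|=2$, every vertex of $V_1$ has in- and out-degree $1$, and the two components are exactly $N^+_D(z_1)\cup\{z_1\}$ and $N^+_D(z_2)\cup\{z_2\}$, which are cliques of size at least three. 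As it stands your proposal contains an incorrect deduction and leaves the decisive structural argument unproven, so it cannot be accepted.
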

\begin{proof}
Let $(V_1, V_2)$ be a bipartition of $D$.
Suppose that $C_{1,2}(D)$ has at least two non-trivial components.
Take two edges $x_1x_2$ and $y_1y_2$ from distinct non-trivial components.
We first consider the case where each of $x_1x_2$ and $y_1y_2$ does not link $V_1$ and $V_2$.
Suppose that $x_1x_2$ belongs to $V_i$ and $y_1y_2$ belongs to $V_j$ for distinct $i$ and $j$.
Without loss of generality, we may assume that $i=1$ and $j=2$.
Since $x_1x_2$ and $y_1y_2$ belong to distinct components, $x_1$ and $y_1$ are not adjacent, which implies that they do not $(1,2)$-compete by definition.
By Proposition~\ref{prop:deg} and Theorem~\ref{thm:notcompete}, $N^+_D(x_1) = \{y_1\}$ or $N^+_D(y_1) = \{x_1\}$.
Without loss of generality, we may assume $N^+_D(x_1) = \{y_1\}$.
Since $y_1y_2$ does not link $V_1$ and $V_2$, by Proposition~\ref{prop:structure}, $y_1$ and $y_2$ have a common out-neighbor, say $x$, in $V_1$.
Since $x_1 \neq x$ by our assumption that $N^+_D(x_1) = \{y_1\}$,
$x$ is a $(1,2)$-step common out-neighbor of $x_1$ and $y_2$ and so $x_1y_2$ is an edge in $C_{1,2}(D)$, which contradicts the assumption that $x_1x_2$ and $y_1y_2$ belong to distinct components.

Now we consider the case where $x_1x_2$ and $y_1y_2$ belong $V_i$ for some $i \in \{1,2\}$.
Without loss of generality, we may assume $i=1$.
Then, by Proposition~\ref{prop:structure}, $x_1$ and $x_2$  have a common out-neighbor $z_1$ in $V_2$, and $y_1$ and $y_2$ have a common out-neighbor $z_2$ in $V_2$.
By the choice of $x_1x_2$ and $y_1y_2$, $z_1 \neq z_2$.
Suppose that  there exists a vertex $z_3$ other than $z_1,z_2$ in $V_2$.
If there is an arc from $x_1$ or $x_2$ to $z_3$, and there is an arc from $y_1$ or $y_2$ to $z_3$, then the edges $x_1x_2$ and $y_1y_2$ are adjacent to the same edge and we reach a contradiction.
Therefore either $(z_3,x_1), (z_3,x_2)$ are arcs in $D$ or $(z_3,y_1), (z_3,y_2)$ are arcs in $D$.
Without loss of generality, we may assume that  $(z_3,x_1), (z_3,x_2)$ are arcs in $D$.
Then $d^+_D(z_3) \ge 2$.
Since $d^+_D(x_1) \ge 1$, $d^+_D(x_2) \ge 1$, by Corollary~\ref{cor:edge}, $x_1z_3$ and $x_2z_3$ are edges in $C_{1,2}(D)$.
Since $y_1$ has an out-neighbor $z_2$ different from $z_3$, and $z_3$ has an out-neighbor $x_1$ different from $y_1$, $y_1z_3$ is an edge in $C_{1,2}(D)$ by Theorem~\ref{thm:notcompete}.
Then we have a path $x_1z_3y_1$ in $C_{1,2}(D)$ and reach a contradiction.
Therefore $V_2$ has exactly two vertices.
If one of $(x_1,z_2)$, $(x_2, z_2)$, $(y_1, z_1)$, $(y_2, z_1)$ is an arc in $D$, then, since $z_1$ and $z_2$ are common out-neighbors of $x_1$ and $x_2$, and $y_1$ and $y_2$, respectively, $x_1x_2$ and $y_1y_2$ are connected, which is a contradiction.
Therefore $(z_2,x_1)$, $(z_2,x_2)$, $(z_1,y_1)$, and $(z_1,y_2)$ are arcs in $D$.
Then, by Corollary~\ref{cor:edge}, $z_2x_1$, $z_2x_2$, $z_1y_1$, and $z_1y_2$ are edges in $C_{1,2}(D)$.
If $|V_1|=4$, then $C_{1,2}(D)$ consists of exactly two complete components since $x_1x_2$ and $y_1y_2$ belong to distinct components.
Suppose $|V_1| > 4$.
Take a vertex $u$ in $V_1$ other than $x_1,x_2,y_1,y_2$.
If both $(z_1, u)$ and $(z_2, u)$ are arcs in $D$, then $z_1z_2$ is an edge of $C_{1,2}(D)$ and so $x_1z_2z_1y_1$ is a path, which contradicts the fact that $x_1x_2$ and $y_1y_2$ belong to distinct components.
If both $(u,z_1)$ and $(u,z_2)$ are arcs in $D$, then $uz_1$ and $uz_2$ are edges in $C_{1,2}(D)$ by Corollary~\ref{cor:edge} and so $x_1z_2uz_1y_1$ is a path, which contradicts the fact that $x_1$ and $y_1$ belong to distinct components.
Therefore either $(u,z_1), (z_2,u)$ are arcs in $D$ or $(z_1,u), (u, z_2)$ are arcs in $D$.
Thus we have shown that every vertex in $V_1$ has indegree and outdegree both equal to $1$ in $D$.
This implies that a vertex in $V_1$ is an in-neighbor of $z_1$ if and only if it is an out-neighbor of $z_2$ in $D$.
Then $N^-_D(z_1) = N^+_D(z_2)$ and $N^-_D(z_2) = N^+_D(z_1)$.
Since $N^-_D(z_1)$ and $N^-_D(z_2)$ are cliques, $N^+_D(z_1)$ and $N^+_D(z_2)$ are cliques in $C_{1,2}(D)$.
Since $d^+_D(z_1) \ge 2$ and $d^+_D(z_2) \ge 2$, by Corollary~\ref{cor:edge}, $z_1$ is adjacent to every vertex in $N^+_D(z_1)$, and $z_2$ is adjacent to every vertex in $N^+_D(z_2)$ in $C_{1,2}(D)$.
Therefore $N^+_D(z_1) \cup \{z_1\}$ and $N^+_D(z_2) \cup \{z_2\}$ are cliques.
Note that $V(C_{1,2}(D))$ is partitioned into $N^+_D(z_1) \cup \{z_1\}$ and $N^+_D(z_2) \cup \{z_2\}$.
Since $x_1x_2$ and $y_1y_2$ belong to distinct components, $N^+_D(z_1) \cup \{z_1\}$ and $N^+_D(z_2) \cup \{z_2\}$ are the complete components in $C_{1,2}(D)$.

Now suppose that $x_1x_2$ or $y_1y_2$ links $V_1$ and $V_2$.
Without loss of generality, we may assume that $x_1x_2$ links $V_1$ and $V_2$, and $x_1 \in V_1$ and $x_2 \in V_2$.
Then, by Lemma~\ref{lem:neighbor}, $x_1$ has a neighbor in $V_1$ or $x_2$ has a neighbor in $V_2$.
Without loss of generality, we may assume that $x_1$ has a neighbor, say $w_1$, in $V_1$.
If $y_1y_2$ does not link $V_1$ and $V_2$, then we may apply the argument for the previous case to edges $x_1w_1$ and $y_1y_2$.
Suppose that $y_1y_2$ links $V_1$ and $V_2$.
Without loss of generality, we may assume that $y_1 \in V_1$ and $y_2 \in V_2$.
Now, by Lemma~\ref{lem:neighbor}, $y_1$ has a neighbor in $V_1$ or $y_2$ has a neighbor in $V_2$.
If $y_1$ has a neighbor, say $w_2$ in $V_1$, we may apply the argument for the previous case to edges $x_1w_1$ and $y_1w_2$.
If $y_2$ has a neighbor, say $w_3$ in $V_2$, we may apply the argument for the previous case to edges $x_1w_1$ and $y_2w_3$.
\end{proof}

\begin{Thm}\label{thm:diameter}
For a bipartite tournament $D$, the nontrivial component of $C_{1,2}(D)$ has diameter at most three.
\end{Thm}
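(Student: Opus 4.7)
The plan is to leverage Theorem~\ref{thm:component} to reduce to the case of a single non-trivial component $C$, then bound the diameter of $C$ by a case analysis on the partite-set membership of two arbitrarily chosen vertices $u, v \in C$. By Theorem~\ref{thm:component}, either $C_{1,2}(D)$ consists of two complete components of size at least three, each of diameter $1$ (so the claim is immediate), or $C_{1,2}(D)$ has at most one non-trivial component $C$. In the latter case I fix distinct $u, v \in C$ and exhibit a walk of length at most $3$ from $u$ to $v$. Let $(V_1, V_2)$ be the bipartition of $D$; by Proposition~\ref{prop:deg}, $u$ and $v$ each have an out-neighbor in $D$, say $a \in N^+_D(u)$ and $b \in N^+_D(v)$.

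Case 1: $u, v$ lie in the same partite set, WLOG $V_1$, so $a, b \in V_2$. If $a = b$, Proposition~\ref{prop:structure} gives the edge $uv$ directly. If $a \ne b$, I inspect the arcs between $\{u, v\}$ and $\{a, b\}$: the sub-cases $u \to b$ and $v \to a$ each yield a shared out-neighbor, so $uv$ is an edge. The remaining sub-case is $b \to u$ and $a \to v$; here I will construct a walk $u\text{--}a\text{--}b\text{--}v$ of length $3$ using Corollary~\ref{cor:edge} to secure the edges $ua$ and $bv$ (valid whenever $u, v$ have at least two out-neighbors in $D$) and producing the edge $ab$ either as a competition edge coming from a common out-neighbor of $a, b$ in $V_1$, or through a bridging vertex in $V_1$ furnished by the non-isolatedness of $a$ or $b$. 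When $u$ or $v$ has out-degree exactly $1$, Corollary~\ref{cor:edge} forbids the direct edge $ua$ or $bv$, and I will instead appeal to Lemma~\ref{lem:neighbor} together with Proposition~\ref{prop:structure} to reach $v$ through an in-neighbor of $a$ in $V_1$.

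Case 2: $u \in V_1$ and $v \in V_2$. If $uv$ is an edge, done. Otherwise Corollary~\ref{cor:12compete} and Theorem~\ref{thm:notcompete} force $N^+_D(u) = \{v\}$ (or symmetrically $N^+_D(v) = \{u\}$). Since $u$ is non-isolated in $C_{1,2}(D)$, $u$ has a neighbor $u'$; any $V_1$-neighbor $u'$ of $u$ must, by Proposition~\ref{prop:structure}, share $v$ as an out-neighbor, and Corollary~\ref{cor:edge} then either certifies $u'v$ as an edge (giving a length-$2$ path $u\text{--}u'\text{--}v$) or forces $u'$ also to have unique out-neighbor $v$, in which case I iterate (or pass to a $V_2$-side neighbor of $u$ furnished by Lemma~\ref{lem:neighbor}) to produce a length-$3$ walk.

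The main obstacle is the last sub-case of Case~1, when both $u$ and $v$ have out-degree $1$ with $b \to u$ and $a \to v$: the direct adjacencies vanish, and one must carefully combine the non-isolatedness of $u, v$ in $C_{1,2}(D)$ with the sharp characterization from Theorem~\ref{thm:notcompete} to surface bridging in-neighbors of $a$ or $b$, then verify that these in-neighbors yield a walk of length exactly $3$ from $u$ to $v$.
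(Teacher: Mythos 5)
Your strategy is genuinely different from the paper's: the paper argues by contradiction, assuming a shortest path $P=uxyzv$ of length four exists and ruling out each distribution of the five vertices of $P$ over $V_1$ and $V_2$ using two local degree observations (a vertex of $P$ cannot have three other vertices of $P$ as out-neighbors, nor three as in-neighbors), whereas you try to exhibit, for an arbitrary pair $u,v$ in the component, a walk of length at most three. The direct route is not hopeless, but as written it has real gaps precisely where you flag ``the main obstacle,'' and the specific constructions you propose there do not work.

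Concretely, in the sub-case of Case~1 with $(v,b),(u,a),(a,v),(b,u)\in A(D)$: the walk $u\,a\,b\,v$ needs the edge $ab$, and since $a,b$ lie in the same partite set, Proposition~\ref{prop:structure} says this edge exists \emph{only} if $a$ and $b$ have a common out-neighbor in $V_1$ --- a ``bridging vertex'' adjacent to both $a$ and $b$ cannot create the edge $ab$; it only yields a walk $u\,a\,x\,b\,v$ of length four, which proves nothing. Worse, when $d^+_D(u)=d^+_D(v)=1$ the edges $ua$ and $bv$ are provably absent (Corollary~\ref{cor:edge}), and there are bipartite tournaments realizing exactly this configuration in which \emph{no} $(u,v)$-walk of length at most three exists, because $C_{1,2}(D)$ splits into two complete components (take $V_2=\{a,b\}$ and give every vertex of $V_1$ a single out-neighbor); the only way out is to invoke the hypothesis that there is a single nontrivial component to exclude such configurations, and your sketch never actually does this --- it promises a walk that in these cases cannot exist. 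The useful observation you are missing is that any $w\in V_2\setminus(N^+_D(u)\cup N^+_D(v))$ has both $u$ and $v$ as out-neighbors and hence (by Corollary~\ref{cor:edge}) is a common neighbor of $u$ and $v$ in $C_{1,2}(D)$, which settles most of these configurations with a path of length two; what remains must be shown to contradict connectedness. Finally, in Case~2 the instruction to ``iterate'' over $V_1$-neighbors of $u$ is not a valid way to obtain a uniform bound of three: if every such neighbor again has unique out-neighbor $v$, the iteration never reaches $v$, and the fallback through a $V_2$-side neighbor is asserted rather than argued. So the proposal is an outline with the decisive steps missing, not a proof.
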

\begin{proof}
Let $(V_1, V_2)$ be a bipartition of $D$.
Suppose, to the contrary, that $C_{1,2}(D)$ has a component of diameter at least four.
Then there are two vertices $u$ and $v$ whose distance is four.
Let $P$ be one of the shortest paths connecting $u$ and $v$.
By Proposition~\ref{prop:deg}, each vertex on $P$ has outdegree at least one in $D$.

First we consider the case where all the vertices of $P$ belong to $V_i$ for some $i \in \{1,2\}$.
Without loss of generality, we may assume $i=1$.
In this case, we let $P = uxyzv$.
By Proposition~\ref{prop:structure}, there is a vertex $w$ in $V_2$ such that $(y,w)$ and $(z,w)$ are arcs in $D$.
If either $(u,w)$ or $(v,w)$ is an arc in $D$, then either $uy$ or $yv$ is an edge in $C_{1,2}(D)$, which contradicts the fact that $P$ is a shortest path.
Therefore $(w,u)$ and $(w,v)$ are arcs in $D$ and so $d^+_D(w) \ge 2$.
Since $d^+_D(u) \ge 1$ and $d^+_D(v) \ge 1$, by Corollary~\ref{cor:edge}, $uw$ and $vw$ are edges in $C_{1,2}(D)$.
Then we have a path $uwv$ and reach a contradiction.

Before we take care of the remaining cases, we observe the following:
\begin{itemize}
  \item[(i)] Each vertex on $P$ has at most two of the remaining vertices as out-neighbors in $D$.
  \item[(ii)] Each vertex on $P$ has at most two of the remaining vertices as in-neighbors in $D$.
\end{itemize}
If there is a vertex $w_1$ on $P$ which has three of the remaining vertices as out-neighbors $w_2$, $w_3$, $w_4$ in $D$, then, by Corollary~\ref{cor:edge}, $w_1w_2$, $w_1w_3$, $w_1w_4$ form a $K_{1,3}$ in $C_{1,2}(D)$, which is impossible.
Thus (i) is true.
If there is a vertex $w_1$ on $P$ which has three of the remaining vertices as in-neighbors $w_2$, $w_3$, $w_4$ in $D$, then, $w_2$, $w_3$, $w_4$ form a triangle in $C_{1,2}(D)$, which contradicts the assumption that $P$ is a shortest path.
Thus (ii) is true.

Now we consider the case where one specific vertex on $P$ belongs to $V_i$ and the remaining four vertices on $P$ belong to $V_j$ for distinct $i$ and $j$.
Without loss of generality, we may assume that $i=1$.
We denote the specific vertex by $u_1$ and the remaining vertices by $v_1$, $v_2$, $v_3$, $v_4$.
Then $u_1$ belongs to $V_1$ and $v_1$, $v_2$, $v_3$, $v_4$ belong to $V_2$.
By (i) and (ii), exactly two of $v_1$, $v_2$, $v_3$, $v_4$ are out-neighbors of $u_1$ and the remaining two vertices are in-neighbors of $u_1$ in $D$.
We may assume that $v_1$, $v_2$ are out-neighbors of $u_1$ and $v_3$, $v_4$ are in-neighbors of $u_1$.
Then, by Corollary~\ref{cor:edge} again, $u_1v_1$ and $u_1v_2$ are edges in $C_{1,2}(D)$.
Furthermore, $u_1$ is a common out-neighbor of $v_3$ and $v_4$ in $D$, so $v_3v_4$ is an edge in $C_{1,2}(D)$.
Since $P$ is a shortest path of length four, one of $v_1, v_2$ should be adjacent to one of $v_3, v_4$.
Without loss of generality, we may assume that $v_2$ is adjacent to $v_3$.
Then $v_2$ and $v_3$ have a common out-neighbor in $V_1$ by Proposition~\ref{prop:structure}.
Since there is an arc from $u_1$ to $v_2$, a common out-neighbor of $v_2$ and $v_3$ cannot be $u_1$.
Thus $d^+_D(v_3) \ge 2$.
Hence, by Corollary~\ref{cor:edge}, $u_1v_3$ is an edge in $C_{1,2}(D)$.
Then $u_1, v_2, v_3$ form a triangle, which is impossible.

We consider the case where two vertices on $P$ belong to $V_i$ and the remaining three vertices on $P$ belong to $V_j$ for distinct $i$ and $j$.
Without loss of generality, we may assume $i=1$.
We denote by $u_1$, $u_2$ the vertices belonging to $V_1$ and by $v_1$, $v_2$, $v_3$ the vertices belonging to $V_2$.
By (i) and (ii), one or two of $v_1$, $v_2$, $v_3$ are out-neighbors of $u_i$ for each $i=1,2$.
Suppose that exactly two of $v_1$, $v_2$, $v_3$ are out-neighbors of $u_1$.
We may assume that $v_1, v_2$ are out-neighbors of $u_1$.
By Corollary~\ref{cor:edge}, $u_1v_1$ and $u_1v_2$ are edges in $C_{1,2}(D)$.
If $v_1$ or $v_2$ is an out-neighbor of $u_2$, $u_1$ is adjacent to $u_2$, however, $u_1$ is already adjacent to $v_1$ and $v_2$, which contradicts the assumption that $P$ is a shortest path.
Therefore both $v_1$ and $v_2$ are in-neighbors of $u_2$.
Then $v_1$ and $v_2$ are adjacent to form a triangle with $u_1v_1$ and $u_1v_2$, which is also impossible.
Therefore exactly one of $v_1$, $v_2$, $v_3$ is an out-neighbor of $u_1$.
By symmetry, exactly one of $v_1$, $v_2$, $v_3$ is an out-neighbor of $u_2$.
Suppose that $u_1$ and $u_2$ have one of $v_1$, $v_2$, $v_3$ as a common out-neighbor.
Then $u_1$ and $u_2$ are out-neighbors of  the remaining two vertices.
Since $u_1$ and $u_2$ have outdegree at least one, $u_1$, $u_2$ and one of the remaining two vertices form a triangle by Corollary~\ref{cor:edge} and we reach a contradiction.
Therefore $u_1$ and $u_2$ have distinct out-neighbors.
We may assume $v_1$ and $v_2$ are the out-neighbors of $u_1$ and $u_2$, respectively.
Then, by Corollary~\ref{cor:edge}, $u_1v_3$ and $u_2v_3$ are edges in $C_{1,2}(D)$.
Since $u_1$ and $u_2$ are common out-neighbors of $v_2,v_3$ and $v_1,v_3$, respectively, $v_2v_3$ and $v_1v_3$ are edges in $C_{1,2}(D)$.
Now $v_3$ is adjacent to $u_1$, $u_2$, $v_1$, and $v_2$, which is impossible.
Hence we have completed the proof.
\end{proof}

\section{$(1,2)$-step realizable graphs }

We say that a graph $G$ is \emph{$(1,2)$-step competition-realizable through a complete bipartite graph} if it is the $(1,2)$-step competition graph of a bipartite tournaments (in this paper, we only consider orientations of complete bipartite graphs and therefore we omit ``through complete bipartite graphs").

Theorem~\ref{thm:component} tells us that if a graph with more than one nontrivial component is $(1,2)$-step competition-realizable, then it must be a disjoint union of two nontrivial complete graphs.
Based upon this fact, it seems to be natural to ask which disjoint union of complete graphs are $(1,2)$-step competition-realizable.
The following proposition answers the question.

\begin{Prop}
Let $G$ be a disjoint union of $K_m$ and $K_n$ with $m \ge n$.
Then $G$ is $(1,2)$-step competition  realizable if and only if $n \neq 2$.
\end{Prop}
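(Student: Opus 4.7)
The plan is to handle the two directions of the biconditional. For the necessity direction, suppose $n = 2$; then $G = K_m \cup K_2$ has two nontrivial components because $m \ge n = 2$. If some bipartite tournament $D$ satisfied $C_{1,2}(D) = G$, then Theorem~\ref{thm:component} would force $G$ to be a disjoint union of exactly two complete components, each of size at least three. Since $K_2$ has only two vertices, this is a contradiction, and the ``only if'' direction follows immediately.

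For the ``if'' direction, I would split into the subcases $n = 1$ and $n \ge 3$. In the easier subcase $n = 1$ with $m \ge 2$, I would orient $K_{m,1}$ so that the single vertex of $V_2$ is a sink. Then the $m$ vertices of $V_1$ share this sink as a common out-neighbor and hence induce $K_m$ in $C_{1,2}(D)$ by Proposition~\ref{prop:structure}, while the sink itself is isolated by Proposition~\ref{prop:deg}, producing $K_m \cup K_1$. The degenerate case $m = n = 1$ is trivially realized by any orientation of $K_{1,1}$.

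The substantive case is $n \ge 3$, and the construction I would use is modeled after the structural template uncovered in the second half of the proof of Theorem~\ref{thm:component}. Take $D$ to be an orientation of $K_{m+n-2,\,2}$ with $V_2 = \{z_1, z_2\}$ and $V_1 = A \sqcup B$, where $|A| = m-1$ and $|B| = n-1$, and orient the arcs so that $z_2 \to a \to z_1$ for every $a \in A$ and $z_1 \to b \to z_2$ for every $b \in B$. Proposition~\ref{prop:structure} then supplies the edges inside $A$ (common out-neighbor $z_1$) and inside $B$ (common out-neighbor $z_2$), and since $|A|, |B| \ge 2$, both $z_1$ and $z_2$ have out-degree at least two, so Corollary~\ref{cor:edge} attaches $z_2$ to every vertex of $A$ and $z_1$ to every vertex of $B$, giving the two intended cliques $\{z_2\} \cup A$ of order $m$ and $\{z_1\} \cup B$ of order $n$.

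The main obstacle is to verify that no spurious edges appear in $C_{1,2}(D)$. Using that each vertex of $V_1$ has out-degree exactly one, I would apply Corollary~\ref{cor:edge} and Proposition~\ref{prop:structure} to rule out each of the pairs $z_1 a$ for $a \in A$, $z_2 b$ for $b \in B$, $z_1 z_2$, and $ab$ for $a \in A$, $b \in B$. This cross-component check also clarifies exactly why the hypothesis $n \ge 3$ is needed: when $n = 2$, $|B|$ collapses to $1$, the out-degree of $z_1$ drops to $1$, and the key application of Corollary~\ref{cor:edge} that attaches $z_1$ to $B$ fails, in direct agreement with the obstruction delivered by Theorem~\ref{thm:component}.
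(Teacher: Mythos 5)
Your proposal is correct and follows essentially the same route as the paper: the necessity is read off from Theorem~\ref{thm:component}, the case $n=1$ uses a sink in the small partite set, and your orientation of $K_{m+n-2,2}$ with the two $4$-cycles $z_2 \to a \to z_1 \to b \to z_2$ is exactly the paper's construction up to relabeling ($v=z_2$, $w=z_1$). The only difference is that you spell out the verification that no cross edges arise, which the paper leaves as ``easy to check,'' and your check via Corollary~\ref{cor:edge} and Proposition~\ref{prop:structure} is sound.
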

\begin{proof}
The `only if' part immediately follows from Theorem~\ref{thm:component}.

Now we show the `if' part.
If $n=1$, then the digraph with vertex set $\{u_1,u_2,\ldots, u_m, u \}$ and arc set $\{(u_i,u) \mid i=1,\ldots,m)\}$ is an orientation of ${K}_{m,1}$, and its $(1,2)$-competition graph is  isomorphic to $G$.
Suppose $n \ge 3$.
Let $D$ be the digraph with vertex set $\{v_1, v_2, \ldots, v_{m-1}, w_1, w_2, \ldots, w_{n-1} \} \cup \{v,w\}$ and arc set
\begin{align*}
&\{(v_i, w) \mid i=1,\ldots,m-1\} \cup
\{(w, w_i) \mid i=1,\ldots,n-1 \} \\ \cup \
&\{(w_i, v) \mid i=1,\ldots,n-1 \} \cup
\{(v, v_i) \mid i=1,\ldots,m-1 \}. \end{align*}
Then $D$ is an orientation of ${K}_{m+n-2, 2}$ with bipartition \[(\{v_1,\ldots,v_{m-1}, w_1, \ldots, w_{n-1}\}, \{v,w\})\] and it is easy to check that the  $(1,2)$-step competition graph of $D$ is isomorphic to $G$.
Therefore $G$ is $(1,2)$-step competition-realizable if $n \neq 2$.
\end{proof}

Let $G_1$ and $G_2$ be graphs with $m$ vertices and $n$ vertices, respectively.
The pair $(G_1,G_2)$ is said to be \emph{competition-realizable (through $K_{m,n}$}) if the disjoint union of $G_1$ and $G_2$ is
the competition graph of an orientation of the complete bipartite graph $K_{m,n}$
with bipartition $(V(G_1),V(G_2))$.

An \emph{edge clique cover} of a graph $G$ is a family $\mathcal{F}$ of cliques of $G$ such that, for any two adjacent vertices of $G$, there exists a clique in $\mathcal{F}$ that contains both of them.

Kim~{\it et al.}~\cite{Kim} gave the following result which characterizes certain competition-realizable pairs in terms of an edge clique cover.

\begin{Thm}[\hskip-0.0005em\cite{Kim}]\label{thm:charcomplete}
Let $G$ be a graph having no isolated vertices,
and let $m$ be a positive integer.
Then, $(G,K_m)$ is a competition-realizable pair
if and only if
there exists an edge clique cover $\mathcal{F}$ of $G$
of size at most $m$
such that
\[
|S \cup S'| \leq |V(G)|-1
\]
holds for any two cliques $S$ and $S'$ in $\mathcal{F}$.
\end{Thm}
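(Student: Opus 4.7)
The plan is to set up a dictionary between orientations $D$ of $K_{|V(G)|,m}$ realizing $(G,K_m)$ and edge clique covers $\mathcal{F}$ of $G$ satisfying the union bound, via the in-neighborhoods of the vertices on the $K_m$-side. Let $n:=|V(G)|$ and let $W$ be the vertex set of the $K_m$-component, so that $D$ is bipartite with bipartition $(V(G),W)$. The guiding observation is that, by bipartiteness, any common out-neighbor of two vertices of $V(G)$ must lie in $W$ and vice versa; consequently, the sets $S_w:=N^-_D(w)\subseteq V(G)$ completely encode the competition structure of $D$ on both sides.

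For the forward direction, suppose $D$ realizes $(G,K_m)$ and set $\mathcal{F}:=\{S_w : w\in W\}$. Any two elements of $S_w$ share $w$ as a common out-neighbor and are therefore adjacent in $C(D)|_{V(G)}=G$, so each $S_w$ is a clique of $G$; and every edge $uv\in E(G)$ is an edge of $C(D)$ and must come from a common out-neighbor $w\in W$, placing $u,v\in S_w$, so $\mathcal{F}$ is an edge clique cover of $G$ of size at most $|W|=m$. Finally, for two distinct $w,w'\in W$, the fact that $ww'\in E(K_m)\subseteq E(C(D))$ forces a common out-neighbor $v\in V(G)$, which lies in neither $S_w$ nor $S_{w'}$; hence $|S_w\cup S_{w'}|\le n-1$.

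For the converse, given $\mathcal{F}=\{S_1,\dots,S_k\}$ with $k\le m$, introduce a fresh $W=\{w_1,\dots,w_m\}$ and orient each edge of $K_{n,m}$ by the rule that the edge between $u\in V(G)$ and $w_i$ is directed from $u$ to $w_i$ when $u\in S_i$, and from $w_i$ to $u$ otherwise (setting $S_i:=\emptyset$ for $k<i\le m$ when padding is required). Then $N^-_D(w_i)=S_i$, so the edge set of $C(D)|_{V(G)}$ is $\bigcup_i \binom{S_i}{2}$, which equals $E(G)$ because each $S_i$ is a clique of $G$ and $\mathcal{F}$ covers every edge of $G$. For any two distinct indices $i,j$, the hypothesized bound $|S_i\cup S_j|\le n-1$ produces a vertex $v\in V(G)\setminus(S_i\cup S_j)$, which is a common out-neighbor of $w_i$ and $w_j$, giving $w_iw_j\in E(C(D))$ and hence $C(D)|_W=K_m$. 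No edge of $C(D)$ joins $V(G)$ and $W$, since a common out-neighbor of two vertices in distinct parts is impossible in a bipartite digraph.

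The main technical point is handling the padding step in the converse when $k<m$. Adjoining empty cliques preserves the status of $\mathcal{F}$ as an edge clique cover, and preserves the union bound because for any $S_i\in\mathcal{F}$ one has $|S_i\cup\emptyset|=|S_i|\le n-1$ (this is the union bound applied with $S=S'=S_i$, or, under the distinct-pair reading, it can be reduced to the nontrivial case $k=m$ by replacing copies of $S_i$ suitably). A smaller side-issue is to verify that no $v\in V(G)$ becomes isolated in $C(D)$; this is exactly where the standing assumption that $G$ has no isolated vertex is used, since every $v$ is incident to some edge of $G$, hence lies in some $S_i\in\mathcal{F}$, and therefore has an out-neighbor $w_i$ that it shares with at least one other vertex of $S_i$.
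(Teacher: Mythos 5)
The paper itself offers no proof of this theorem --- it is quoted from the reference [Kim] --- so there is no in-paper argument to compare against. Your proof is the natural one and almost certainly the intended one: encode an orientation $D$ of $K_{n,m}$ by the in-neighborhoods $S_w=N_D^-(w)$ for $w$ on the $K_m$ side, note that bipartiteness forces every instance of competition to be mediated by a vertex of the opposite part, and translate ``$C(D)$ restricted to $V(G)$ equals $G$'' into ``$\{S_w\}$ is an edge clique cover of $G$'' and ``$C(D)$ restricted to $W$ equals $K_m$'' into the union bound. Both directions of your argument are sound as far as they go, including the check that no vertex of $V(G)$ or of $W$ is accidentally isolated and that no edge crosses the bipartition.

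The point you relegate to an aside --- whether ``any two cliques $S$ and $S'$'' permits $S=S'$ --- is the one place you should commit to a reading, because the statement as transcribed is genuinely fragile at the corners and your padding step sits exactly on the fault line. Under the inclusive reading the condition forces $|S_i|\le n-1$ for every clique, your padding with empty sets goes through verbatim, and your converse is complete; but then the forward direction of the theorem fails for $m=1$ with $G=K_n$, since the only realization sends every arc into the unique vertex $w$, giving $S_w=V(G)$ even though $(K_n,K_1)$ is realizable (cf.\ Theorem~\ref{thm:complete3}(i)). Under the distinct-pairs reading the forward direction is fine, but your converse breaks precisely when $k=1<m$ and $S_1=V(G)$: the hypothesis is then vacuous, yet in your construction $w_1$ has no out-neighbor at all, so $C(D)$ restricted to $W$ is $K_{m-1}$ plus an isolated vertex rather than $K_m$ --- and indeed $(K_2,K_m)$ is not realizable for $m\ge 2$ by Theorem~\ref{thm:complete3}(i), although the vacuous condition would certify it. The clean formulation, and the one your two directions actually establish without any padding, takes $\mathcal{F}$ to be an indexed family of exactly $m$ (possibly empty, possibly repeated) cliques with the union bound imposed on all pairs of distinct indices. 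Note that this paper later applies the theorem with the inclusive reading (it deduces $|S|\le n-1$ by taking $S=S'$ in the displayed condition), and under that reading your proof is complete for every $m\ge 2$.
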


Now we completely characterize competition-realizable pairs $(K_m, K_n)$, which extends the following results given by Kim~{\it et al.}~\cite{Kim}.
Then we eventually characterize the complete graphs which are $(1,2)$-step competition-realizable.

\begin{Prop}[\hskip-0.0005em\cite{Kim}]\label{prop:complete}
Let $m$ and $n$ be integers such that $m  \geq n \geq 6$. Then the pair $(K_m,K_n)$ is competition-realizable.
\end{Prop}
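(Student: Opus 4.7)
The plan is to apply Theorem~\ref{thm:charcomplete} with $G = K_n$: since the pair $(K_m,K_n)$ is the same as $(K_n,K_m)$, its competition-realizability is equivalent to the existence of an edge clique cover $\mathcal{F}$ of $K_n$ of size at most $m$ with $|S \cup S'| \le n-1$ for all $S,S' \in \mathcal{F}$. Because $m \ge n \ge 6$, it suffices to exhibit such a cover using at most six cliques.

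For the base case $n = 6$, I would partition $V(K_6) = \{1,\ldots,6\}$ as $\{1,2,3\} \cup \{4,5,6\}$ and take the six ``mixed'' triangles
\[
T_1 = \{1,2,4\},\ T_2 = \{1,3,5\},\ T_3 = \{2,3,6\},\ T_4 = \{1,5,6\},\ T_5 = \{2,4,5\},\ T_6 = \{3,4,6\},
\]
three having two vertices in one half and three having two vertices in the other. A direct enumeration shows that every edge of $K_6$ lies in some $T_i$ and that every pairwise union $|T_i \cup T_j|$ is either $4$ or $5$. For the general case $n \ge 6$, I would extend by padding: set $C_i := T_i \cup \{7,8,\ldots,n\}$. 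Then each $C_i$ is a clique of size $n-3$ in $K_n$, and since the padding part is common to all six,
\[
|C_i \cup C_j| = (n-6) + |T_i \cup T_j| \le (n-6) + 5 = n-1.
\]
Every edge of $K_n$ is covered: edges inside $\{7,\ldots,n\}$ by any $C_i$, crossing edges because each of $1,\ldots,6$ appears in some $T_i$, and edges inside $\{1,\ldots,6\}$ by the base case. The family $\{C_1,\ldots,C_6\}$ has size $6 \le n \le m$, so Theorem~\ref{thm:charcomplete} gives competition-realizability.

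I expect the base case to be the only real obstacle. The pairwise-union condition already forces each clique in any such cover of $K_n$ to have size at most $n-3$: if $|S| = n-2$ and $V(K_n) \setminus S = \{a,b\}$, then any clique $S'$ covering the edge $\{a,b\}$ contains both $a$ and $b$, whence $S \cup S' = V(K_n)$, violating the constraint. So one cannot use a single large clique to absorb most of the edges and must instead locate a genuinely balanced configuration on six points. Once the $K_{3,3}$-symmetric family above is written down and verified, the padding argument makes the extension to all $n \ge 6$ and $m \ge n$ routine.
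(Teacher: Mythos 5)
Your proof is correct. One caveat on the comparison: the paper does not actually prove Proposition~\ref{prop:complete} --- it is imported from \cite{Kim} as a quoted result --- so there is no in-text argument to measure yours against; what you have written is a legitimate self-contained derivation from Theorem~\ref{thm:charcomplete} (itself quoted from \cite{Kim}). The reduction is sound: since competition-realizability of the pair is symmetric, applying Theorem~\ref{thm:charcomplete} with $G=K_n$ (which has no isolated vertices as $n\ge 6$) reduces the claim to exhibiting an edge clique cover of $K_n$ of size at most $m$ whose pairwise unions all have at most $n-1$ vertices. Your six triangles $T_1,\dots,T_6$ do the job for $n=6$: direct checking confirms that every one of the fifteen edges of $K_6$ lies in some $T_i$ and that every pair $T_i,T_j$ intersects, so $|T_i\cup T_j|=6-|T_i\cap T_j|\le 5$. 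The padding step is also correct: adding the common block $\{7,\dots,n\}$ to each triangle preserves coverage (internal edges of the block lie in every $C_i$, crossing edges are covered because each of $1,\dots,6$ occurs in some $T_i$) and shifts every union bound by exactly $n-6$, giving $|C_i\cup C_j|\le n-1$; the family has size $6\le n\le m$. Your closing remark that the union condition forces every clique in such a cover to have at most $n-3$ vertices is also right (and is essentially the argument the paper itself uses in the ``only if'' direction of Theorem~\ref{thm:complete3}(i)); it correctly explains why a balanced design such as your intersecting triangle family is needed rather than one large clique.
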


\begin{Prop}[\hskip-0.0005em\cite{Kim}]\label{prop:kim2}
Let $n$ be a positive integer.
Then the pair $(K_n, K_n)$ is competition-realizable if and only if $n = 1$ or $n \ge 6$.
\end{Prop}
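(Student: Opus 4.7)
The plan is to reduce the question to an edge clique cover problem via Theorem~\ref{thm:charcomplete} applied with $G = K_n$ and $m = n$: the pair $(K_n, K_n)$ is competition-realizable if and only if $K_n$ admits an edge clique cover $\mathcal{F}$ of size at most $n$ such that $|S \cup S'| \le n-1$ for all $S, S' \in \mathcal{F}$. Setting $S = S'$ shows at once that no $S \in \mathcal{F}$ can equal $V(K_n)$, a constraint which drives the ``only if'' direction.

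For the ``if'' direction, the case $n \ge 6$ is an immediate specialization of Proposition~\ref{prop:complete} with $m = n$. For $n = 1$, the unique orientation of $K_{1,1}$ is a single arc $u \to v$, whose two vertices share no out-neighbor, so its competition graph consists of two isolated vertices, i.e., the disjoint union of two copies of $K_1$.

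For the ``only if'' direction I would rule out $n \in \{2,3,4,5\}$ by short case analyses of the combinatorial condition above. For $n = 2$ the only clique covering the unique edge of $K_2$ is $V(K_2)$ itself, which is forbidden. For $n = 3$ every member of $\mathcal{F}$ must be an edge, but any two distinct edges of $K_3$ already have union of size $3 > n-1$, so $|\mathcal{F}|$ would be forced to be at most $1$, far too few to cover all three edges. For $n = 4$, a triangle in $\mathcal{F}$ would force every other clique in $\mathcal{F}$ to lie inside it (leaving the fourth vertex in no clique, hence uncovering its incident edges), and otherwise $\mathcal{F}$ consists of edges with $|\mathcal{F}| \le 4 < 6 = \binom{4}{2}$. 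The main obstacle will be $n = 5$: the ``largest clique forces containment'' trick rules out any size-$4$ clique in $\mathcal{F}$, and counting rules out covers consisting only of edges (since $\binom{5}{2} = 10 > 5 \ge |\mathcal{F}|$), so $\mathcal{F}$ must contain a triangle $T = \{a,b,c\}$. The decisive step is then to observe that $|S \cup T| \le 4$ forces $|S \setminus T| \le 1$ for every $S \in \mathcal{F}$, so no clique in $\mathcal{F}$ contains both vertices of $V(K_5) \setminus T$, leaving the edge between them uncovered and producing the desired contradiction.
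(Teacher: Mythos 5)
Your proof is correct. The paper itself quotes this proposition from \cite{Kim} without proof, but its own proof of the more general Theorem~\ref{thm:complete3}(i) follows exactly your strategy: reduce via the edge clique cover criterion of Theorem~\ref{thm:charcomplete} and then bound the sizes of cliques in $\mathcal{F}$ using the union condition, the only cosmetic difference being that the paper derives the uniform bound that every clique in $\mathcal{F}$ has size at most $n-3$ (forcing $n\ge 5$ and then $m\ge 10$ by counting edges), whereas you unroll the same containment-and-counting arguments case by case for $n=2,3,4,5$.
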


\begin{Thm}\label{thm:complete3}
The following are true:
\begin{itemize}
  \item[(i)] For positive integers $m$ and $n$ with $m \ge n$, the pair $(K_m, K_n)$ is competition-realizable if and only if one of the following holds: $n=1$; $m \ge n \ge 6$; $m \ge 10$ and $n=5$.
  \item[(ii)] For positive integer $l$, the complete graph $K_{l}$ is $(1,2)$-step competition-realizable if and only if $l \ge 12$.
\end{itemize}
\end{Thm}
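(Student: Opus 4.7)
The plan is to treat (i) and (ii) together, since (ii) will be derived from (i) plus an additional out-degree condition. For (i), I would translate Theorem~\ref{thm:charcomplete} into a combinatorial question about a function $f$ that assigns to each vertex $u \in V(K_m)$ a subset $f(u) \subseteq \{1, \dots, n\}$, where $f(u) = \{i : u \in S_i\}$. The two requirements of Theorem~\ref{thm:charcomplete} then read as: (a) for every $u \ne u'$, $f(u) \cap f(u') \ne \emptyset$; and (b) for every pair $\{i, j\} \subseteq \{1, \dots, n\}$, some $u$ satisfies $f(u) \cap \{i, j\} = \emptyset$. The key lemma, valid for $n \ge 2$, is that $|f(u)| \ge 3$ for every vertex $u$: size $0$ makes $u$ isolated, contradicting $m \ge 2$; size $1$, say $f(u) = \{i\}$, forces every other vertex into $S_i$ and so obstructs any witness of a pair containing $i$; size $2$, say $f(u) = \{i, j\}$, makes the pair $\{i, j\}$ itself un-witnessable, since any witness $u'$ needs $f(u') \cap \{i, j\} = \emptyset$ while the edge $uu'$ forces $f(u) \cap f(u') \ne \emptyset$.

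Equipped with $|f(u)| \ge 3$, a witness for the pair $\{i, j\}$ must have $f(u) \subseteq \{1, \dots, n\} \setminus \{i, j\}$, a set of size $n - 2$. This is impossible for $n \in \{2, 3, 4\}$, killing these cases. For $n = 5$ it forces $f(u) = \{1, \dots, 5\} \setminus \{i, j\}$ exactly, and since distinct pairs require distinct witnesses, $m \ge \binom{5}{2} = 10$. Conversely, when $n = 5$ and $m \ge 10$, I would assign the ten $3$-subsets of $\{1, \dots, 5\}$ as $f$-values to ten vertices (any two $3$-subsets of a $5$-set intersect) and copy one of them to the remaining $m - 10$ vertices; Theorem~\ref{thm:charcomplete} then supplies the required orientation of $K_{m, 5}$. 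The remaining cases $n = 1$ and $n \ge 6$ are immediate from the trivial construction and Propositions~\ref{prop:complete} and \ref{prop:kim2}.

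For (ii), the central observation is that, for an orientation $D$ of $K_{a, b}$ with bipartition $(V_1, V_2)$, the identity $C_{1, 2}(D) = K_{a + b}$ holds if and only if three conditions hold simultaneously: (A) every pair in $V_1$ has a common out-neighbor in $V_2$; (B) the symmetric statement for $V_2$; and (C) every vertex of $D$ has out-degree at least $2$. Indeed, Proposition~\ref{prop:structure} reduces the intra-partite edges to (A) and (B), while Theorem~\ref{thm:notcompete} converts the $(1, 2)$-competition of every cross-partite pair into (C), since the existence, for a fixed $u \in V_1$, of an out-neighbor distinct from \emph{every} $v \in V_2$ is equivalent to $|N^+(u)| \ge 2$. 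Conditions (A) and (B) together are precisely the competition-realizability of $(K_a, K_b)$, so (i) applies. Condition (C) rules out $b = 1$, so (i) forces either $b \ge 6$ with $a \ge b$ (giving $l \ge 12$) or $b = 5$ with $a \ge 10$ (giving $l \ge 15$); either way, $l \ge 12$.

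For the matching upper bound I would first handle $l = 12$ by exhibiting the edge clique cover of $K_6$ consisting of the six $3$-subsets $\{1,2,3\}, \{1,2,4\}, \{3,4,5\}, \{3,4,6\}, \{1,5,6\}, \{2,5,6\}$, checking by inspection that every two of them intersect (so $|S_i \cup S_j| \le 5$) and that the fifteen edges of $K_6$ are covered; the corresponding orientation of $K_{6, 6}$ has every vertex of out-degree exactly three, so (A), (B), (C) all hold, giving $C_{1, 2}(D) = K_{12}$. For $l > 12$, I would append $l - 12$ new vertices to $V_1$, each copying the out-neighborhood in $V_2$ of some fixed existing vertex of $V_1$; this preserves (A) (intersections carry over from old vertices), (B) (the out-neighborhoods of each $v_j$ in $V_1$ only grow), and (C) (new vertices inherit out-degree three, and old out-degrees only gain). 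The main obstacle is the $|f(u)| \ge 3$ lemma and its tight conversion into $m \ge 10$ when $n = 5$; once those are in place, the reduction in (ii) and the explicit $6 \times 6$ construction fall out by direct verification.
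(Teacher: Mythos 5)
Your proof is correct and shares the paper's overall architecture---part (i) via the edge-clique-cover criterion of Theorem~\ref{thm:charcomplete}, part (ii) via part (i) plus the outdegree-two condition of Corollary~\ref{cor:leasttwo}---but both halves are executed differently. For the `only if' part of (i), the paper applies Theorem~\ref{thm:charcomplete} to the small side: an edge clique cover of $K_n$ by at most $m$ cliques, whose clique sizes it bounds from above (no clique of size $n-1$, then none of size $n-2$, hence all of size at most $n-3$, forcing $n\ge 5$ and, for $n=5$, at least ten $2$-cliques to cover the ten edges). You apply it to the large side: a cover of $K_m$ by at most $n$ cliques, with the dual key lemma $|f(u)|\ge 3$ bounding from below the number of cliques through each vertex, after which the $\binom{5}{2}=10$ pairwise-distinct witnesses give $m\ge 10$. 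The two computations carry the same information (your $f(u)$ records the out-neighbourhood of $u$ in $V_2$, while the paper's cliques are the in-neighbourhoods of vertices of $V_1$), so neither is stronger, but yours localizes the obstruction at vertices rather than at cliques and makes the $n=5$ count immediate. For the `if' part of (ii), the paper invokes Proposition~\ref{prop:complete} abstractly and then proves that \emph{any} orientation realizing $K_m\sqcup K_n$ with $m,n\ge 6$ automatically has minimum outdegree two; you instead exhibit an explicit cover of $K_6$ by six pairwise-intersecting $3$-sets (which does cover all fifteen edges, as required) and duplicate a vertex to reach every $l>12$, which is more self-contained since it bypasses Proposition~\ref{prop:complete} for this part. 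The one loose end is presentational: you speak of ``the corresponding orientation'' of Theorem~\ref{thm:charcomplete} having all outdegrees equal to three, but the theorem as stated is only an existence claim; you should write the orientation down explicitly (arc $u\to w_i$ if and only if $u\in S_i$, and $w_i\to u$ otherwise) and verify your conditions (A), (B), (C) directly, which is routine.
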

\begin{proof}
To show the `if' part of (i), it is sufficient to take care of the cases $n=1$; $m \ge 10$ and $n = 5$ by Proposition~\ref{prop:complete}.
If $n=1$, then we orient each edge in the complete bipartite graph $K_{m,1}$ so that each arc in the orientation goes from a vertex in the partite set with $m$ vertices to the vertex in the partite set with one vertex, and it is easy to check that the competition graph of the resulting orientation is $(K_{m}, K_1)$.
Now we will construct a digraph $D$ with vertex set $V(D) = X \cup Y$ where $|X| = m \ge 10$ and $|Y| = 5$ in the following way.
For each pair $\alpha$ of vertices in $Y$, we take a vertex $x_\alpha$ in $X$ so that $x_\alpha$ is a common out-neighbor of the vertices in $\alpha$ and $x_\alpha \neq x_\beta$ for distinct pairs $\alpha, \beta$.
Then we add additional arcs the direction of each of which is from $X$ to $Y$ so that the underlying graph of $D$ is a complete bipartite graph with bipartition $(X,Y)$.
Then each vertex in $X$ has outdegree at least three.
Since $|Y|=5$, each pair of vertices in $X$ has a common out-neighbor in $Y$ by the Pigeonhole Principle.
Therefore we have shown that $X$ and $Y$ form cliques in $C(D)$.
By Proposition~\ref{prop:structure}, there is no edge joining a vertex in $X$ and a vertex in $Y$.
Hence the competition graph of $D$ is the union of $K_m$ and $K_n$ and so $(K_m, K_n)$ is competition-realizable for $m \ge 10$ and $n=5$.

To show the `only if' part of (i), suppose that $(K_m, K_n)$ is competition-realizable for some $2 \le n \le 5$.
Then there exists a digraph $D$ whose competition graph is the union of $K_m$ and $K_n$.
Let $V_1 = V(K_m)$ and $V_2 = V(K_n)$.
By Theorem~\ref{thm:charcomplete}, there exists an edge clique cover $\mathcal{F}$ of $G := K_{n}$ with the vertex set $V_2$ such that $|\mathcal{F}| \le m$ and
\begin{equation}\label{eqn:clique}
|S \cup S'| \leq n - 1
\end{equation}
holds for any two cliques $S$ and $S'$ in $\mathcal{F}$.
Then each clique in $\mathcal{F}$ has size at most $n - 1$ by (\ref{eqn:clique}).
Suppose $\mathcal{F}$ has a clique $S_1$ of size $n - 1$.
Then there is a vertex $v$ in $V_2$ not in $S_1$.
Since $v$ is adjacent to every vertex in $S_1$ in $G$, there is a clique $S_2$ in $\mathcal{F}$ containing $v$ and at least one vertex in $S_1$.
Then $|S_1 \cup S_2| = n$ contradicting  (\ref{eqn:clique}).
Therefore every clique in $\mathcal{F}$ has size at most  $n - 2$.
Now we suppose that $\mathcal{F}$ has a clique $S_3$ of size $n - 2$.
Then there are two vertices in $V_2$ not in $S_3$.
Since they are adjacent in $G$, they must be contained in a clique $S_4$ in $\mathcal{F}$.
Then $|S_3 \cup S_4| = n$ contradicting  (\ref{eqn:clique}).
Therefore every clique in $\mathcal{F}$ has size at most  $n - 3$.
Since $n \ge 2$, there exists a clique in $\mathcal{F}$ of size at least two.
Therefore $n - 3 \ge 2$.
Thus, by our assumption that $V_2$ has at most five vertices, $n = 5$.
Hence each clique in $\mathcal{F}$ has size at most two.
However, there are ten edges in $K_n$ to be covered by the cliques in $\mathcal{F}$, so $m \ge 10$.

To show the `only if' part of (ii), suppose that the complete graph $K_{l}$ is $(1,2)$-step competition-realizable for some positive integer $l$.
Then, for some positive integers $m$ and $n$ satisfying $m+n=l$, there exists an orientation $D$ of $K_{m,n}$ such that $C_{1,2}(D) = K_l$.
By Proposition~\ref{prop:structure}, the competition graph of $D$ is the union of $K_m$ and $K_n$.
If $m=1$ or $n=1$, then it is easy to check that $C_{1,2}(D)$ has an isolated vertex by Proposition~\ref{prop:deg}.
Therefore $m, n \ge 2$ and so $m+n \ge 12$ by (i).

To show the `if' part of (ii), fix an integer $l \ge 12$.
Then there exist integers $m$ and $n$ such that $m+n=l$ and $m \ge n \ge 6$.
By Proposition~\ref{prop:structure} and Proposition~\ref{prop:complete}, there exists an orientation $D$ of $K_{m,n}$ with a bipartition $(V_1, V_2)$ whose $(1,2)$-step competition graph $G$ has the induced subgraphs $G[V_1]=K_m$ and $G[V_2]=K_n$. Thus we have to show that every vertex in $V_1$ is adjacent to every vertex in $V_2$ in $G$. By Corollary~\ref{cor:leasttwo}, it suffices to show that each vertex has outdegree at least two in $D$.

To the contrary, suppose that a vertex $u$ in $V_1$ has outdegree at most one in $D$.
If $u$ has no out-neighbor in $D$, then $u$ is an isolated vertex in $G[V_1]$, which contradicts $G[V_1]=K_m$.
Suppose that $u$ has the only out-neighbor $v$ in $D$.
Since $G[V_1]=K_m$, $v$ is also an out-neighbor of each vertex of $V_1$ in $D$.
Thus $v$ has no out-neighbor in $D$, and so $v$ is an isolated vertex in $G[V_2]$, which contradicts $G[V_2]=K_n$.
Thus every vertex in $V_1$ has outdegree at least two in $D$.
Similarly, we can show that every vertex in $V_2$ has outdegree at least two in $D$.
Therefore every vertex in $V_1$ is adjacent to every vertex in $V_2$ in $G$.
\end{proof}

\begin{Rem}
If a nontrivial component $K$ of the $(1,2)$-step competition graph $G$ of a bipartite tournament has diameter one, that is, it is complete, then by Theorems~\ref{thm:component} and~\ref{thm:complete3}, one of the following holds:
\begin{itemize}
    \item[(i)] $G$ is a complete graph with at least $12$ vertices;
    \item[(ii)] $K$ is a complete graph with at least two vertices and $G$ consists of $K$ and at least one isolated vertices;
    \item[(iii)] $G$ consists of two complete components of size at least three.
\end{itemize}
\end{Rem}

Now we consider the connected non-complete graphs which are $(1,2)$-step competition-realizable.
Then they have diameter $2$ or $3$ by Theorem~\ref{thm:diameter}.

\begin{Lem}\label{lem:share}
Let $D$ be a bipartite tournament containing the digraph given in Figure~\ref{fig:k14} as a subdigraph.
If the $(1,2)$-step competition graph of $D$ is connected and not isomorphic to $K_{1,4}$, then it has edge-sharing triangles.
\end{Lem}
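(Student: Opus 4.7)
The plan is to work with an explicit copy of the Figure~\ref{fig:k14} subdigraph inside $D$, observe that its vertices already realize a $K_{1,4}$ inside $C_{1,2}(D)$, and then show that any further vertex or arc that $D$ contributes beyond this subdigraph necessarily forces two triangles of $C_{1,2}(D)$ to share an edge.

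First I would label the vertices of the embedded Figure~\ref{fig:k14} subdigraph, determine how they split across the bipartition $(V_1,V_2)$ of $D$, and record which arcs of the subdigraph yield which spokes of the $K_{1,4}$ in $C_{1,2}(D)$, using Corollary~\ref{cor:edge} for the spokes that link the two partite sets and Proposition~\ref{prop:structure} for any spokes that lie inside a single partite set. This bookkeeping will also identify at least one triangle of $C_{1,2}(D)$ that is already forced by the subdigraph itself, which is the ``anchor'' triangle that the second triangle will eventually share an edge with.

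Next, because $C_{1,2}(D)$ is connected but not isomorphic to $K_{1,4}$, either $D$ contains a vertex outside the subdigraph or the subdigraph embeds in a way that produces an extra edge among its vertices. By Theorem~\ref{thm:component} the nontrivial part of $C_{1,2}(D)$ sits in a single component, and by Theorem~\ref{thm:diameter} it has diameter at most three, so every such ``extra'' vertex $x$ lies within distance three of the hub of the $K_{1,4}$. I would split into cases based on whether the new edge $e$ witnessing the deviation from $K_{1,4}$ belongs to $V_1$, belongs to $V_2$, or links $V_1$ and $V_2$; in the first two cases Proposition~\ref{prop:structure} produces a common out-neighbor of the endpoints of $e$, while in the third case Theorem~\ref{thm:notcompete} produces a $(1,2)$-step common out-neighbor via a length-two directed walk. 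In every subcase the bipartite-tournament axiom orients every pair from opposite parts, so the new arcs forced on $x$ by each vertex of the Figure~\ref{fig:k14} subdigraph can be traced explicitly; when combined with the arcs already on record, Corollary~\ref{cor:edge} repeatedly promotes out-arcs to edges of $C_{1,2}(D)$, and one of those new edges closes a triangle with two of the $K_{1,4}$ spokes.

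The main obstacle will be the width of this case analysis: since the Figure~\ref{fig:k14} subdigraph has several labeled vertices in each part, every outside vertex $x$ comes with many forced arcs, and one must check every orientation pattern of those arcs. The unifying idea that keeps the argument manageable is that avoiding edge-sharing triangles is extremely rigid -- as soon as two spokes of the $K_{1,4}$ pick up a common neighbor other than the hub, edge-sharing triangles appear, and the bipartite-tournament condition together with Corollary~\ref{cor:edge} makes such a common neighbor essentially unavoidable as soon as $C_{1,2}(D)$ strictly contains the $K_{1,4}$. Concluding that each case produces two triangles sharing a spoke of the $K_{1,4}$ will finish the proof.
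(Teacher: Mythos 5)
Your overall strategy --- fix the embedded copy of the Figure~\ref{fig:k14} digraph, note that it already forces a $K_{1,4}$ in $C_{1,2}(D)$, and then case-analyse how an extra vertex attaches to it --- is the same as the paper's. But there are two genuine problems. First, your ``anchor triangle'' does not exist: the $(1,2)$-step competition graph of the Figure~\ref{fig:k14} digraph is \emph{exactly} $K_{1,4}$ (with hub $y$, say), which is triangle-free, so the subdigraph by itself forces no triangle whatsoever. Both triangles of the required edge-sharing pair have to be manufactured from the interaction with the outside vertex, so the plan ``find a second triangle sharing an edge with the anchor'' cannot even begin. Relatedly, your closing claim that the two triangles always share a \emph{spoke} of the $K_{1,4}$ is not achievable in general: in the case where the new vertex $v\in V_1$ is a common out-neighbor of $w_1$ and $w_2$, the two triangles one obtains are $yw_1w_2y$ and $vw_1w_2v$, sharing the edge $w_1w_2$, which is not a spoke.

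Second, the proposal defers the entire substance of the lemma. Saying that Corollary~\ref{cor:edge} ``repeatedly promotes out-arcs to edges \ldots and one of those new edges closes a triangle'' is an assertion of the conclusion, not an argument; the lemma \emph{is} this case analysis. The paper organizes it not by the location of a new edge of $G$ (your proposed split, which does not directly control the arcs of $D$) but by the orientation of the arcs between the single extra vertex $v$ and the five subdigraph vertices: $v\in V_1$ with $(w_1,v),(w_2,v)\in A(D)$; $v\in V_1$ with some $(v,w_i)\in A(D)$; $v\in V_2$ with $(v,x),(v,z)\in A(D)$; $v\in V_2$ with some arc into $v$, further split by the orientation of the remaining arc. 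Each case needs a specific observation (e.g.\ connectivity is invoked precisely once, to get $d^+_D(v)\ge 1$ when $v$ is a sink of both $w_1$ and $w_2$; elsewhere one uses that $w_1,w_2$ already have out-neighbors among $x,y,z$). Also note that the dichotomy you offer --- ``an outside vertex or an extra edge among the five'' --- collapses: the subdigraph is a full orientation of $K_{3,2}$, so if $D$ had only those five vertices then $C_{1,2}(D)=K_{1,4}$; the sixth vertex always exists. As it stands the proposal has a false premise and omits the part of the proof that carries all the weight.
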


\begin{proof}
The $(1,2)$-step competition graph of the digraph given in Figure~\ref{fig:k14} is a star graph $K_{1,4}$.
Therefore $G := C_{1,2}(D)$ contains $K_{1,4}$ as a subgraph.
Let $(V_1, V_2)$ be a bipartition of the complete bipartite graph from which $D$ is obtained.
For the vertices of the digraph given in Figure~\ref{fig:k14}, let $x$, $y$, $z$ be the vertices in $V_1$ and $w_1$, $w_2$ be the vertices in $V_2$.
Since $G$ is not isomorphic to $K_{1,4}$, there exists a vertex, say  $v$, distinct from $x$, $y$, $z$, $w_1$, $w_2$.

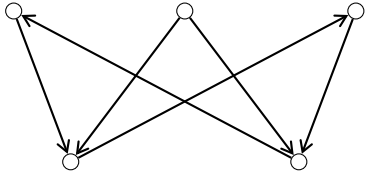
\begin{figure}
\begin{center}
\begin{tikzpicture}[x=1.5cm, y=2cm]
\begin{scope}

    \vertex (x1) at (0,1) [label=above:$$]{};
    \vertex (x2) at (1.5,1) [label=above:$$]{};
    \vertex (x3) at (3,1) [label=above:$$]{};

    \vertex (y1) at (0.5,0) [label=above:$$]{};
    \vertex (y2) at (2.5,0) [label=above:$$]{};

\end{scope}

    \draw[->,>=angle 60, thick] (x1) -- (y1);
    \draw[->,>=angle 60, thick] (x2) -- (y1);
    \draw[->,>=angle 60, thick] (x2) -- (y2);
    \draw[->,>=angle 60, thick] (x3) -- (y2);

    \draw[->,>=angle 60, thick] (y1) -- (x3);
    \draw[->,>=angle 60, thick] (y2) -- (x1);

\end{tikzpicture}

\end{center}
\caption{An orientation of ${K}_{3,2}$ }
\label{fig:k14}
\end{figure}

Assume $v \in V_1$.
Consider the case where $(w_1, v)$, $(w_2, v)$ are arcs in $D$.
Then $w_1w_2$ is an edge in $G$.
Since $G$ is connected, $d^+_D(v) \ge 1$ by Proposition~\ref{prop:deg}.
Moreover, each of $w_1$ and $w_2$ has an out-neighbor among $x$, $y$, $z$ other than $v$.
Therefore $vw_1$, $vw_2$ are edges in $G$ by Corollary~\ref{cor:edge}.
Thus $w_1w_2yw_1$ and $w_1w_2vw_1$ are triangles which share an edge and we are done.
Now we consider the case where $(v, w_1)$ or $(v, w_2)$ is an arc in $D$.
Without loss of generality, we may assume $(v, w_1)$ is an arc in $D$.
Then $\{v,x,y\}$ forms a triangle in $G$.
Since $v$ (resp.\ $w_2$) has an out-neighbor $w_1$ (resp.\ $x$) which is different from $w_2$ (resp.\ $v$), $vw_2$ is an edge in $G$ by Theorem~\ref{thm:notcompete}, which results in the triangle $yvw_2y$.
Then triangles $vxyv$ and $yvw_2y$ share an edge, which is desired.

Assume $v \in V_2$.
Consider the case where $(v, x)$ and $(v, z)$ are arcs in $D$.
Then $vx$ and $vz$ are edges in $G$ by Corollary~\ref{cor:edge}.
Since $v$ (resp.\ $y$) has an out-neighbor $x$ (resp.\ $w_1$) which is different from $y$ (resp.\ $v$),
$vy$ is an edge in $G$ by Theorem~\ref{thm:notcompete}.
Thus $vxyv$ and $vyzv$ are triangles in $G$ which share an edge as desired.
Now we consider the case where $(x, v)$ or $(z, v)$ is an arc in $D$.
Without loss of generality, we may assume $(x, v)$ is an arc in $D$.
Then $xv$ and $xw_1$ are edges in $G$ by Corollary~\ref{cor:edge}.
Suppose that $(v, z)$ is an arc in $D$.
Since $v$ (resp.\ $y$) has an out-neighbor $z$ (resp.\ $w_1$) which is different from $y$ (resp.\ $v$), $vy$ is an edge in $G$ by Theorem~\ref{thm:notcompete}.
Then $vxyv$ and $w_1xyw_1$ are triangles in $G$ which share an edge.
Suppose that $(z, v)$ is an arc in $D$.
Then $v$ is a common out-neighbor of $x$ and $z$ in $D$, so $xz$ is an edge in $G$.
In addition, $zw_2$ is an edge of $G$ by Corollary~\ref{cor:edge}.
Therefore $xyzx$ and $yzw_2y$ are triangles in $G$ which share an edge and this completes the proof.
\end{proof}

\begin{Thm}\label{thm:key}
Suppose that a connected non-complete graph $G$ does not have edge-sharing triangles.
Then $G$ is $(1,2)$-step competition-realizable if and only if it is isomorphic to the star graph $K_{1,4}$.
\end{Thm}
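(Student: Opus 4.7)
The forward direction is immediate from the orientation of $K_{3,2}$ shown in Figure~\ref{fig:k14}, whose $(1,2)$-step competition graph is $K_{1,4}$ (as verified at the start of the proof of Lemma~\ref{lem:share}), and $K_{1,4}$ is connected, non-complete, and triangle-free. For the converse, let $D$ be a bipartite tournament with $G := C_{1,2}(D)$ connected, non-complete, and free of edge-sharing triangles. My overall plan is to exhibit the digraph of Figure~\ref{fig:k14} as a subdigraph of $D$; once this is done, Lemma~\ref{lem:share} together with the hypothesis that $G$ has no edge-sharing triangles forces $G \cong K_{1,4}$, which is exactly what we want.

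Write $(V_1,V_2)$ for the bipartition of $D$. Since $G$ is connected and non-complete it has at least three vertices and hence a vertex $v$ of degree at least two; without loss of generality take $v \in V_1$ with two neighbors $u_1, u_2$ in $G$. I would split into cases according to the partite-set memberships of $u_1$ and $u_2$. In the case $u_1, u_2 \in V_1$, Proposition~\ref{prop:structure} supplies common out-neighbors $w_1, w_2 \in V_2$ of $\{v,u_1\}$ and $\{v,u_2\}$; if $w_1 \neq w_2$, the four potential arcs between $\{u_1,u_2\}$ and $\{w_1,w_2\}$ either match the Figure~\ref{fig:k14} pattern with $(y,x,z) = (v,u_1,u_2)$, or else create a third edge that completes a second triangle at $v$, contradicting the no-edge-sharing-triangles hypothesis. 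If instead $w_1 = w_2$, the triangle $\{v,u_1,u_2\}$ already exists, and one uses connectedness of $G$ together with Corollary~\ref{cor:edge} to produce a second triangle overlapping it, again a contradiction. The remaining cases (one or both of $u_1,u_2$ lying in $V_2$) are treated symmetrically, invoking Corollary~\ref{cor:12compete} and Corollary~\ref{cor:edge} to supply the required out-neighbors whenever two $(1,2)$-competing vertices lie in distinct partite sets.

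The main obstacle will be the bookkeeping in the subcases where the five vertices $\{v,u_1,u_2,w_1,w_2\}$ do not already realize Figure~\ref{fig:k14} on the nose. In such subcases I must exploit the connectedness (or non-completeness) of $G$ to pull in a further vertex of $D$, orient its arcs against the existing five, and iterate the case analysis; after sufficient iteration one either locates Figure~\ref{fig:k14} on a modified choice of five vertices or produces a pair of triangles in $G$ sharing an edge, which contradicts the hypothesis. As soon as Figure~\ref{fig:k14} is located as a subdigraph of $D$, Lemma~\ref{lem:share} closes the argument and yields $G \cong K_{1,4}$.
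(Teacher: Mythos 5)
Your skeleton for the converse --- locate the digraph of Figure~\ref{fig:k14} inside $D$ and then invoke Lemma~\ref{lem:share} --- is the same device the paper uses, but only in one of its two cases, and the parts of your sketch that would have to carry the remaining weight are not actually proofs. Two concrete gaps. First, your starting configuration is too weak. The paper begins with an \emph{induced} path $xyz$ (which exists because $G$ is non-complete), so the non-adjacency of $x$ and $z$ can be fed into Corollary~\ref{cor:edge} to force the orientations of specific arcs; you instead start from a vertex $v$ with two neighbors $u_1,u_2$ that may be adjacent to each other. In your subcase $w_1=w_2$ you then have a triangle $\{v,u_1,u_2\}$ and assert that ``connectedness of $G$ together with Corollary~\ref{cor:edge}'' produces a second triangle sharing an edge with it. That is not automatic: Corollary~\ref{cor:edge} needs $d^+_D(v)\ge 2$ to make $vw_1$ an edge, and nothing in your configuration rules out $N^+_D(v)=\{w_1\}$. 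Establishing that a triangle always propagates to edge-sharing triangles is essentially the content of the theorem and cannot be waved through.

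Second, the cases where $u_1$ or $u_2$ lies in $V_2$ are not ``symmetric'' to the all-in-$V_1$ case, because edges inside a partite set arise from competition (Proposition~\ref{prop:structure}) while cross edges arise from $(1,2)$-competition (Corollary~\ref{cor:12compete}), and these have genuinely different arc structures. The paper's second case --- where \emph{no} induced path of length two has exactly one cross edge --- cannot be reduced to finding Figure~\ref{fig:k14} at all; the paper handles it by showing $x_1,x_2$ each have exactly two neighbors, that $y_1$ has no neighbor in $V_2$, and then computing $d_G(y_1,y_2)=2$ via the diameter bound of Theorem~\ref{thm:diameter} and Lemma~\ref{lem:neighbor} to manufacture two edge-sharing triangles. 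Your closing proposal to ``pull in a further vertex and iterate the case analysis'' has no termination criterion and no description of what the iteration does, so it does not substitute for this argument. As written, the proposal establishes the easy direction and a fragment of the hard one, but the core of the converse is missing.
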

\begin{proof}
Let $D^*$ be a digraph given in Figure~\ref{fig:k14}.
Then $D^*$ is an orientation of $K_{3,2}$ and it is easy to check that the $(1,2)$-step competition graph of $D^*$ is isomorphic to $K_{1,4}$.
Therefore the `if' part is true.

To show the `only if' part, suppose that $G$ is the $(1,2)$-step competition graph of a bipartite tournament $D$ with a bipartition $(V_1, V_2)$.
Since $G$ is non-complete, $G$ has an induced path of length $2$.
We consider the following two cases:
\begin{itemize}
\item[(i)] there exists an induced path $P$ of length two in $G$ such that exactly one of the edges on $P$ joins $V_1$ and $V_2$.
\item[(ii)] there is no induced path of length two in $G$ such that exactly one of its edges joins $V_1$ and $V_2$.
\end{itemize}
We first consider the case (i).
Let $P = xyz$.
Without loss of generality, we may assume $x$ and $y$ are in $V_1$ and $z$ is in $V_2$.
Then $x$ and $y$ have a common out-neighbor $w$ in $V_2$.
We first consider the case $w = z$.
By Proposition~\ref{prop:deg}, $z$ has an out-neighbor, say $v$, in $V_1$ in $D$.
Since $y$ and $z$ belong to distinct partite sets and are adjacent, $y$ has an out-neighbor $u$ other than $w$ by Corollary~\ref{cor:edge}.
Since $x$ and $z$ belong to $V_i$ and $V_j$, respectively, for distinct $i$ and $j$, and are not adjacent, $u$ must be an in-neighbor of $x$ by the same corollary.
If $(u,v)$ is an arc in $D$, then $xyux$ and $yzuy$ are triangles which share an edge, which is impossible.
Therefore $(v,u)$ is an arc in $D$ and we obtain a subdigraph of $D$ isomorphic to the one given in Figure~\ref{fig:k14}.
Now we consider the case $w \neq z$.
Then, by Corollary~\ref{cor:edge}, $z$ cannot be an out-neighbor of $x$, for otherwise $x$ and $z$ are adjacent, which contradicts the hypothesis that $xyz$ is an induced path in $G$.
Therefore $(z,x)$ is an arc in $D$.
On the other hand, $w$ has an out-neighbor, say $v'$, by Proposition~\ref{prop:deg}.
Since $x$ and $z$ are not adjacent in $G$, $(y,z)$ and $(v',z)$ are arcs in $D$ by Corollary~\ref{cor:edge}, and we obtain a subdigraph of $D$ isomorphic to the one given in Figure~\ref{fig:k14}.
By the hypothesis, $G$ is a non-complete connected graph without edge-sharing triangles.
Therefore, by Lemma~\ref{lem:share}, $G$ is $K_{1,4}$ in both cases.

Now we consider the case (ii).
Since $G$ is connected, it has an edge linking $V_1$ and $V_2$.
Let $x_1y_1$ with $x_1 \in V_1$, $y_1 \in V_2$ be such an edge.
Then $x_1$ and $y_1$ $(1,2)$-compete by Corollary~\ref{cor:12compete}.
Without loss of generality, we may assume that $x_1$ and $y_1$ have a $(1,2)$-step common out-neighbor in $V_2$.
Then there exist $x_2 \in V_1$ and $y_2 \in V_2$ such that $(x_1,y_2)$, $(y_1,x_2)$, and $(x_2,y_2)$ are arcs in $D$, which yields the edge $x_1x_2$ in $G$.
Then $y_1x_1x_2$ is a path of length two in $G$.
By the case assumption, $y_1x_2$ is an edge in $G$.
Therefore $x_1x_2y_1x_1$ is a triangle in $G$.
If $x_1$ is adjacent to a vertex $z$ other than $x_2$, $y_1$ in $G$, then $z$ must be adjacent to at least one of $x_2$, $y_1$ in $G$ by the case assumption, which results in two triangles sharing an edge.
Therefore $x_2$ and $y_1$ are the only neighbors of $x_1$ in $G$.
For the same reason, $x_1$ and $y_1$ are the only neighbors of $x_2$ in $G$.

Suppose that there exists a vertex $y$ in $V_2$ such that $y$ and $y_1$ are adjacent in $G$.
Then $x_1y$ is an edge in $G$ by the case assumption.
Therefore $x_1y_1yx_1$ is a triangle which shares an edge with the triangle $x_1x_2y_1x_1$ in $G$, a contradiction.
Thus no vertex in $V_2$ is adjacent to $y_1$.
Hence $y_1$ and $y_2$ are not adjacent in $G$ and so $d_G(y_1, y_2) \ge 2$.
Take one of the shortest paths from $x_1$ to $y_2$ in $G$ and denote it by $Q$.
Since $x_2$ and $y_1$ are the only neighbors of $x_1$ in $G$, $x_2$ or $y_1$ immediately follows $x_1$ on $Q$.
However, $y_1$ and $x_1$ are the only neighbors of $x_2$, so $y_1$ must immediately follow $x_1$ since $Q$ is the shortest.
By the choice of $Q$, the $(y_1,y_2)$-section of $Q$ is a shortest path from $y_1$ to $y_2$.
Therefore $d_G(x_1, y_2) = d_G(y_1, y_2) + 1$.
By Theorem~\ref{thm:diameter}, $d_G(x_1, y_2) \le 3$ and so $d_G(y_1, y_2) \le 2$.
Thus $d_G(y_1,y_2) = 2$.
Hence there exists a vertex $x_3$ which is adjacent to $y_1$ and $y_2$.
As we have shown that no vertex in $V_2$ is adjacent to $y_1$, $x_3$ is in $V_1$.
Thus the edge $x_3y_1$ joins $V_1$ and $V_2$.
By Lemma~\ref{lem:neighbor}, $x_3$ has a neighbor in $G$, say $x_4$, in $V_1$.
Then $x_4y_1$ and $x_4y_2$ are edges in $G$ by the case assumption.
Therefore $x_3x_4y_1x_3$ and $x_3x_4y_2x_3$ are triangles which share an edge in $G$ and we reach a contradiction.
\end{proof}
\noindent
The following corollaries immediately follow from Theorem~\ref{thm:key}.
\begin{Cor}\label{cor:tree}
A tree is $(1,2)$-step competition-realizable if and only if it is isomorphic to the star graph $K_{1,4}$.
\end{Cor}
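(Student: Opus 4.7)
The plan is to deduce the corollary as a direct application of Theorem~\ref{thm:key}, once the small (complete) trees are excluded by a separate appeal to Theorem~\ref{thm:complete3}(ii).

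First I would dispatch the \emph{if} direction, which is essentially free: the star $K_{1,4}$ is a tree, and the orientation of $K_{3,2}$ depicted in Figure~\ref{fig:k14} has $(1,2)$-step competition graph $K_{1,4}$ by the computation carried out in the proof of Theorem~\ref{thm:key}.

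For the \emph{only if} direction, let $T$ be a tree that is $(1,2)$-step competition-realizable. I would split according to whether $T$ is complete. Since a tree has at most one edge when it is complete, the complete case forces $T \in \{K_1, K_2\}$; by Theorem~\ref{thm:complete3}(ii), $K_\ell$ is $(1,2)$-step competition-realizable only when $\ell \geq 12$, so neither $K_1$ nor $K_2$ can occur. Hence $T$ must be connected and non-complete. But a tree contains no cycle, and therefore in particular no pair of triangles sharing an edge, so Theorem~\ref{thm:key} applies and yields $T \cong K_{1,4}$.

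There is essentially no obstacle here: all of the real content sits in Theorem~\ref{thm:key}. The only mild subtlety is remembering that Theorem~\ref{thm:key} hypothesises non-completeness, which is why the two small trees $K_1$ and $K_2$ must be ruled out separately via Theorem~\ref{thm:complete3}(ii) before the main theorem can be invoked.
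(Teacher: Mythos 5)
Your proposal is correct and follows the paper's intended route: the paper simply states that the corollary ``immediately follows from Theorem~\ref{thm:key}'', and your argument is exactly that deduction, with the additional (welcome) care of ruling out the complete trees $K_1$ and $K_2$ via Theorem~\ref{thm:complete3}(ii) before invoking the non-completeness hypothesis of Theorem~\ref{thm:key}.
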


\begin{Cor}
If a connected graph is $(1,2)$-step competition-realizable, then it has a vertex of degree at least three.
\end{Cor}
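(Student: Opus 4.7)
The plan is a short three-way case analysis on $G$, invoking only Theorem~\ref{thm:complete3}(ii) and Theorem~\ref{thm:key}. First, if $G$ is complete, then Theorem~\ref{thm:complete3}(ii) forces $|V(G)| \geq 12$, so every vertex of $G$ has degree at least $11$, and in particular at least three. Next, if $G$ is isomorphic to the star $K_{1,4}$, then the center of the star has degree $4 \geq 3$, and we are done.

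The remaining case is that $G$ is connected, non-complete, and not isomorphic to $K_{1,4}$. Here I would apply the contrapositive of Theorem~\ref{thm:key}: under these hypotheses, being $(1,2)$-step competition-realizable forces $G$ to contain a pair of triangles sharing an edge, say $uvw_1$ and $uvw_2$ with common edge $uv$. The shared endpoint $u$ is then adjacent to the three distinct vertices $v$, $w_1$, $w_2$, so $\deg_G(u) \geq 3$, completing the proof. I do not anticipate any real obstacle: the genuine content of the corollary has already been carried by Theorems~\ref{thm:complete3}(ii) and~\ref{thm:key}, and the argument above merely organizes the three cases and extracts the high-degree vertex from the edge-sharing triangle configuration.
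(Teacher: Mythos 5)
Your proof is correct, but it takes a different route from the paper's. The paper splits on whether $G$ has a triangle: if $G$ is triangle-free it certainly has no edge-sharing triangles, so Theorem~\ref{thm:key} forces $G\cong K_{1,4}$; if $G$ has a triangle, the paper first notes that a realizable graph on fewer than four vertices has at most one edge (so $|V(G)|\ge 4$) and then gets the degree-three vertex from connectivity, since some fourth vertex must attach to the triangle. You instead split on complete / $K_{1,4}$ / neither, invoke Theorem~\ref{thm:complete3}(ii) to dispose of the complete case, and in the remaining case apply the contrapositive of Theorem~\ref{thm:key} to produce two triangles sharing an edge $uv$, whose shared endpoint $u$ is adjacent to $v$ and to the two (necessarily distinct) third vertices. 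Both arguments are sound; the trade-off is that yours leans on the heavier classification in Theorem~\ref{thm:complete3}(ii) for the complete case but then reads the conclusion directly off the edge-sharing-triangle configuration, whereas the paper's version needs only Theorem~\ref{thm:key} plus an elementary counting and connectivity observation (and implicitly relies on $K_1$ and $K_2$ being non-realizable to legitimize applying Theorem~\ref{thm:key}, whose hypothesis requires $G$ non-complete --- a minor point your case split handles more cleanly).
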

\begin{proof}
If a connected $(1,2)$-step competition-realizable graph is triangle-free, then it is isomorphic to $K_{1,4}$ by Theorem~\ref{thm:key} and so the corollary is true.

Suppose that a connected $(1,2)$-step competition-realizable graph $G$ has a triangle.
If $G$ has less than four vertices, then it can easily be checked that $G$ has at most one edge and so it cannot have a triangle.
Therefore $G$ has at least four vertices.
Since $G$ is connected and has a triangle, it should have a vertex of degree at least three.
\end{proof}

\begin{Cor}
No connected unicyclic graph is $(1,2)$-step competition-realizable.
\end{Cor}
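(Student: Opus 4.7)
The plan is to reduce the statement to two previously proved facts: Theorem~\ref{thm:key} (which classifies connected non-complete graphs without edge-sharing triangles that are $(1,2)$-step competition-realizable) and Theorem~\ref{thm:complete3}(ii) (which characterizes realizable complete graphs). The key structural observation is that a connected unicyclic graph $G$ has exactly one cycle, so it contains at most one triangle, and therefore vacuously has no pair of edge-sharing triangles.

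First I would split into cases based on whether $G$ is complete. If $G = K_n$ is complete and unicyclic, then, using that a unicyclic graph on $n$ vertices has exactly $n$ edges, the identity $\binom{n}{2} = n$ forces $n = 3$. Theorem~\ref{thm:complete3}(ii) states that $K_l$ is $(1,2)$-step competition-realizable only when $l \ge 12$, so $K_3$ is not realizable, handling this case.

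If $G$ is non-complete, then $G$ satisfies the hypothesis of Theorem~\ref{thm:key}: any two triangles in $G$ would force two distinct cycles in $G$, contradicting unicyclicity. Applying Theorem~\ref{thm:key} yields $G \cong K_{1,4}$. But $K_{1,4}$ is a tree ($5$ vertices and $4$ edges), so it is acyclic and in particular not unicyclic. This contradiction completes the argument.

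I do not expect a substantive obstacle here; the proof is essentially a bookkeeping step that invokes the two theorems already established. The only minor point to be careful about is recalling the convention that a unicyclic graph on $n$ vertices has precisely $n$ edges, so that both the complete case collapses to $K_3$ and $K_{1,4}$ can be ruled out on edge-count grounds.
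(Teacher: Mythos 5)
Your argument is correct and is exactly the reasoning the paper leaves implicit: a connected unicyclic graph has only one cycle, hence no edge-sharing triangles, so Theorem~\ref{thm:key} forces it to be $K_{1,4}$ (a tree) in the non-complete case, while the complete case collapses to $K_3$, which is excluded by Theorem~\ref{thm:complete3}(ii). No gaps; the case split on completeness is the one point that genuinely needs attention, and you handle it.
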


\begin{Cor}
If a connected graph with diameter three is $(1,2)$-step competition-realizable, then it has a triangle and edge-sharing cycles.
\end{Cor}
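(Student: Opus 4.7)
The plan is to reduce the statement directly to Theorem~\ref{thm:key} via its contrapositive. Let $G$ be a connected $(1,2)$-step competition-realizable graph of diameter three. I first note that $G$ is not complete, since a complete graph (on at least two vertices) has diameter one. Moreover, $G$ is not isomorphic to $K_{1,4}$, since $K_{1,4}$ has diameter two. Thus $G$ is a connected non-complete graph that is $(1,2)$-step competition-realizable but is not $K_{1,4}$.

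Invoking the contrapositive of Theorem~\ref{thm:key}, such a $G$ must have edge-sharing triangles, i.e., there exist two triangles in $G$ sharing a common edge. The existence of a triangle yields the first conclusion. For the second, a pair of triangles sharing an edge is, in particular, a pair of cycles (each of length three) sharing an edge, which gives the edge-sharing cycles conclusion.

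There is no real obstacle here; the substantive work has already been done in Theorem~\ref{thm:key} and Theorem~\ref{thm:diameter}, and all that remains is to observe that the diameter hypothesis rules out the two exceptional cases (complete graphs and $K_{1,4}$) allowed by those earlier results. The only point to state carefully is why diameter three excludes both $K_{1,4}$ (diameter two) and all complete graphs (diameter one), after which the conclusion is immediate.
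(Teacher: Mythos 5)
Your proof is correct and follows exactly the route the paper intends: the corollary is stated as an immediate consequence of Theorem~\ref{thm:key}, and your argument — using the diameter-three hypothesis to rule out both complete graphs and $K_{1,4}$, then applying the contrapositive of that theorem to obtain edge-sharing triangles (hence a triangle and edge-sharing cycles) — is precisely that derivation.
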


Remark: Figure~\ref{fig:diamter3}.

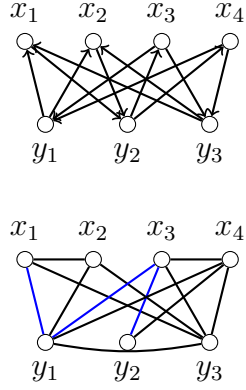
\begin{figure}
\begin{center}

\begin{tikzpicture}[x=0.9cm, y=1.1cm]
\begin{scope}

    \vertex (x1) at (0,1) [label=above:$x_1$]{};
    \vertex (x2) at (1,1) [label=above:$x_2$]{};
    \vertex (x3) at (2,1) [label=above:$x_3$]{};
    \vertex (x4) at (3,1) [label=above:$x_4$]{};

    \vertex (y1) at (0.3,0) [label=below:$y_1$]{};
    \vertex (y2) at (1.5,0) [label=below:$y_2$]{};
    \vertex (y3) at (2.7,0) [label=below:$y_3$]{};

\end{scope}
	\path 
    (x1) edge [<-,thick] (y1)
    (x1) edge [->,thick] (y2)
    (x1) edge [<-,thick] (y3)

    (x2) edge [<-,thick] (y1)
    (x2) edge [->,thick](y2)
    (x2) edge [<-,thick] (y3)

    (x3) edge [->,thick] (y1)
    (x3) edge [<-,thick] (y2)
    (x3) edge [->,thick] (y3)

    (x4) edge [->,thick] (y1)
    (x4) edge [<-,thick] (y2)
    (x4) edge [->,thick] (y3)
	;
\end{tikzpicture}

\end{center}

{\begin{center}
\begin{tikzpicture}[x=0.9cm, y=1.1cm]
\begin{scope}

    \vertex (x1) at (0,1) [label=above:${x_1}$]{};
    \vertex (x2) at (1,1) [label=above:$x_2$]{};
    \vertex (x3) at (2,1) [label=above:$x_3$]{};
    \vertex (x4) at (3,1) [label=above:$x_4$]{};

    \vertex (y1) at (0.3,0) [label=below:$y_1$]{};
    \vertex (y2) at (1.5,0) [label=below:${y_2}$]{};
    \vertex (y3) at (2.7,0) [label=below:$y_3$]{};

\end{scope}

	\path
    (x1) edge [thick] (x2)
    (x3) edge [thick] (x4)

    (y1) edge [thick, bend right=10] (y3)

    (x1) edge [thick,color=blue] (y1)
    (x1) edge [thick] (y3)

    (x2) edge [thick] (y1)
    (x2) edge [thick] (y3)

    (x3) edge [thick,color=blue] (y1)
    (x3) edge [thick,color=blue] (y2)
    (x3) edge [thick] (y3)

    (x4) edge [thick] (y1)
    (x4) edge [thick] (y2)
    (x4) edge [thick] (y3)

	;
\end{tikzpicture}
\end{center} }
\caption{A bipartite tournament and its $(1,2)$-step competition graph which has diameter three}
\label{fig:diamter3}
\end{figure}

\section{Extremal Cases}

In the following, we compute the maximum number of edges and the minimum number of edges which  the (1,2)-step competition graph of an orientation of $K_{m,n}$ might have.

Since a simple graph with $m+n$ vertices has at most ${m+n \choose 2}$ edges, any (1,2)-step competition graph $G$ of an orientation of $K_{m,n}$ satisfies
\[|E(G)|\leq {m+n \choose 2}.\]

It immediately follows from  Theorem~\ref{thm:complete3}(ii) that there is an orientation of $K_{m,n}$ for which its (1,2)-step competition graph has exactly ${m+n \choose 2}$ edges whenever $m \ge n \ge 6$.

\begin{Cor} For each integers $m$ and $n$ with $m \ge n \ge 6$, there exists an orientation $D$ of $K_{m,n}$ such that
\[
|E(C_{1,2}(D))| = {m+n \choose 2}.
\]
\end{Cor}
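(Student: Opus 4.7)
The plan is to read the desired orientation directly out of the `if' direction of Theorem~\ref{thm:complete3}(ii). Fix integers $m \ge n \ge 6$ and set $l := m+n$, so $l \ge 12$. The proof of Theorem~\ref{thm:complete3}(ii) does more than assert mere existence of some bipartite tournament whose $(1,2)$-step competition graph is $K_l$: starting from the orientation of $K_{m,n}$ with bipartition $(V_1,V_2)$ supplied by Proposition~\ref{prop:complete} and Proposition~\ref{prop:structure} (so that $C_{1,2}(D)[V_1] = K_m$ and $C_{1,2}(D)[V_2] = K_n$), it verifies that every vertex has outdegree at least two in $D$ and then invokes Corollary~\ref{cor:leasteast} effectively, namely Corollary~\ref{cor:leasttwo}, to conclude that every vertex of $V_1$ is adjacent in $C_{1,2}(D)$ to every vertex of $V_2$. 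Consequently $C_{1,2}(D)$ is the complete graph $K_{m+n}$.

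I therefore simply take $D$ to be the orientation of $K_{m,n}$ produced in that proof. Then
\[
|E(C_{1,2}(D))| = |E(K_{m+n})| = \binom{m+n}{2},
\]
which is exactly the claim. Since the construction and all of its verifications are already recorded inside the proof of Theorem~\ref{thm:complete3}(ii), no additional argument is needed and I do not anticipate any genuine obstacle; the corollary is purely an observation about the edge count of $K_{m+n}$, matching the trivial upper bound $\binom{m+n}{2}$ noted just above its statement.
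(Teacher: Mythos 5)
Your proposal is correct and follows the same route as the paper, which derives the corollary directly from Theorem~\ref{thm:complete3}(ii). You are right (and slightly more careful than the paper's one-line justification) to note that one must read the explicit construction out of the proof of the `if' part of Theorem~\ref{thm:complete3}(ii), since that construction works for every pair $m \ge n \ge 6$ and yields $C_{1,2}(D)=K_{m+n}$, hence $\binom{m+n}{2}$ edges.
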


Now we consider the minimum case.

\begin{Thm}
Let $G$ be the $(1,2)$-step competition graph of an orientation of $K_{m,n}$ with edges as few as possible for positive integer $m,n$ with $m \ge n$. Then
\[
|E(G)|=
\begin{cases}
  0 & \mbox{if $m=n=2$;} \\
  {n \choose 2} & \mbox{otherwise}.
\end{cases}
\]
\end{Thm}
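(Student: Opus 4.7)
The argument has two parts: exhibiting orientations that achieve the stated minima, and proving that no orientation does better.

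For achievability, I give explicit constructions. When $m=n=2$, orient $K_{2,2}$ as a directed $4$-cycle: each vertex has out-degree one with a unique distinct out-neighbor, so no two vertices share a common out-neighbor (no competition edges) and by Theorem~\ref{thm:notcompete} no vertex has an out-neighbor other than its partner (no $(1,2)$-competition edges); hence the $(1,2)$-step competition graph is empty. When $(m,n) \ne (2,2)$, orient every edge of $K_{m,n}$ from $V_2$ to $V_1$. By Proposition~\ref{prop:deg}, every $x \in V_1$ is then isolated, while every pair in $V_2$ has every $x \in V_1$ as a common out-neighbor, so by Proposition~\ref{prop:structure} $V_2$ induces $K_n$, yielding exactly $\binom{n}{2}$ edges.

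For the lower bound, fix any orientation $D$. The bound $|E(G)|\ge 0$ is vacuous when $m=n=2$, so assume $(m,n)\neq(2,2)$; the case $n=1$ is vacuous too, so assume $n\ge 2$ (hence $m\ge 3$). If some $y^* \in V_2$ has out-degree zero in $D$, every $x \in V_1$ points to $y^*$, so $V_1$ forms a clique of size $m$ in $G$ by Proposition~\ref{prop:structure}, giving $|E(G)| \ge \binom{m}{2} \ge \binom{n}{2}$. Symmetrically, if some $x^* \in V_1$ has out-degree zero then $V_2$ forms $K_n$. Otherwise every vertex has out-degree at least one; if moreover every out-degree is at least two, Corollary~\ref{cor:leasttwo} yields all $mn$ linking edges, so $|E(G)| \ge mn \ge \binom{n}{2}$.

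The remaining subcase has every out-degree $\ge 1$ with at least one vertex of out-degree exactly $1$. Let $\BBB$ be the family of \emph{bad pairs} $\{y_i,y_j\} \subseteq V_2$ with $N^+(y_i) \cap N^+(y_j) = \emptyset$; then $|E(G[V_2])| = \binom{n}{2} - |\BBB|$ (by Proposition~\ref{prop:structure}), so the desired inequality reduces to $|E(G[V_1])| + |L| \ge |\BBB|$, where $L$ denotes the set of linking edges. For each bad pair I produce witness edges by analyzing $R := V_1 \setminus (N^+(y_i) \cup N^+(y_j))$: if $R \ne \emptyset$, any $x \in R$ points to both $y_i$ and $y_j$, so $d^+(x) \ge 2$, and Corollary~\ref{cor:edge} makes $xy_i$ and $xy_j$ into linking edges; if $R = \emptyset$, then $V_1 = N^+(y_i) \sqcup N^+(y_j)$, and since $m \ge 3$ at least one of the two parts (say $N^+(y_i)$) has size $\ge 2$, so any two of its vertices share $y_j$ as a common out-neighbor and are adjacent in $G[V_1]$ by Proposition~\ref{prop:structure}. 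The main obstacle is the bookkeeping that turns these witnesses into the inequality $|E(G[V_1])| + |L| \ge |\BBB|$ in spite of potential overlap among witnesses of distinct bad pairs; I expect to settle this via a careful double-counting argument in which each bad pair contributes at least two witness edges and the multiplicity of each edge as a witness is suitably controlled, yielding the lower bound.
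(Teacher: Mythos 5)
Your achievability constructions and the first two branches of the lower bound (a vertex of outdegree zero forces a clique of size $m$ or $n$; all outdegrees at least two force all $mn$ linking edges via Corollary~\ref{cor:leasttwo}) are correct and essentially match the paper. The gap is in the remaining subcase, which is the only genuinely hard one. You correctly reduce the claim to $|E(G[V_1])|+|L|\ge|\BBB|$ and you correctly produce witness edges for each bad pair, but the step that converts these witnesses into the inequality is exactly the part you defer (``I expect to settle this via a careful double-counting argument''). This is not routine bookkeeping: a single linking edge $xy_i$ serves as a witness for every bad pair $\{y_i,y_j\}$ with $y_j\in N^+_D(x)$, so its multiplicity can be as large as $n-1$, and likewise an edge $xx'$ of $G[V_1]$ can witness many bad pairs $\{y_i,y_j\}$ with $N^+(y_i)\sqcup N^+(y_j)=V_1$. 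The naive double count therefore only yields something of the order of $|L|+|E(G[V_1])|\ge 2|\BBB|/(n-1)$, which falls far short of $|\BBB|$ (which can be as large as $\binom{n}{2}$). Without a concrete charging scheme whose multiplicities are actually bounded, the proof does not close.

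For comparison, the paper avoids the bad-pair accounting entirely. In the case where every vertex has outdegree at least $1$, it first shows that $D$ has at most $m+2$ vertices of outdegree exactly $1$: if there were at least $m+3$ of them, two would lie in $V_1$, and these two would have $n-2$ common in-neighbors in $V_2$, each of outdegree at least $2$, a contradiction. It then observes via Corollary~\ref{cor:edge} that an arc $(u,v)$ fails to produce a linking edge precisely when $d^+_D(u)=1$, so $|L|\ge mn-(m+2)$, and checks $mn-(m+2)\ge\binom{n}{2}$ for $n\ge2$, $m\ge3$. I recommend either replacing your unfinished subcase with an argument of this type or supplying the multiplicity control explicitly; as written, the crux of the lower bound is missing.
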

\begin{proof}
It is easy to see that the directed cycle on four vertices  is an orientation of $K_{2,2}$ and its $(1,2)$-step competition graph has no edge.
Thus the equality holds when $m=n=2$.

Let $D^*$ be a digraph with vertex set $\{u,v_1,v_2,\ldots,v_m\}$ and arc set $\{(u,v_i) \mid i = 1,2,\ldots,m\}$.
Then it can easily be check that $D^*$ is an orientation of $K_{1,m}$ and the (1,2)-step competition graph of $D^*$ has no edge.
Therefore $|E(G)| = 0 = {1 \choose 2}$. Thus the equality holds when $n=1$.

Suppose that $n \ge 2$ and $m \ge 3$.
Let $D^{**}$ be the orientation of $K_{m,n}$ defined by
\begin{align*}
V(D^{**}) &= \{u_1,u_2, \ldots, u_m, v_1, v_2, \ldots, v_n\} \\
A(D^{**}) &= \{(v_i,u_j) \mid i \in \{1,\ldots,n\}, j \in \{1,\ldots,m \} \}
\end{align*}
Then it is easy to check that the $(1,2)$-step competition graph of $D^{**}$ is the disjoint union of $K_n$ and the set of $m$ isolated vertices.
Therefore $|E(C_{1,2}(D^{**}))| = {n \choose 2}$ and so $|E(G)| \le {n \choose 2}$.
It remains to show $|E(G)| \ge {n \choose 2}$.
Let $D$ be an orientation of $K_{m,n}$ with bipartition $(V_1, V_2)$, $|V_1|=m$, $|V_2|=n$, whose (1,2)-step competition graph is $G$.
First, suppose that there exists a vertex $v \in V(D)$ with $d^+_D(v)=0$.
Then $d^-_D(v) = m$ or $d^-_D(v) = n$ and the in-neighborhood of $v$ forms a clique of size $m$ or $n$ in $G$.
Since $m \ge n$ by the hypothesis, $G$ has at least ${n \choose 2}$ edges and so $|E(G)| \ge {n \choose 2}$.
Now suppose that every vertex of $D$ has nonzero outdegree.
We show that there are at most $m+2$ vertices of outdegree $1$ by contradiction.
Suppose that $D$ has at least $m+3$ vertices of outdegree $1$.
Then there are two vertices $u_1, u_2$ of outdegree $1$ in $V_1$.
Then both $u_1$ and $u_2$ have indegree $n-1$.
Since $|V_2|=n$, there are at least $n-2$ common in-neighbors of $u_1$ and $u_2$, which implies that there are at least $n-2$ vertices of outdegree at least $2$.
Therefore there are at most $|V(D)| - (n-2) = m+2$ vertices of outdegree $1$ in $D$ and we reach a contradiction.
Thus we can conclude that $D$ has at most $m+2$ vertices of outdegree $1$.
Since each vertex of $D$ has outdegree at least $1$, it follows from Corollary~\ref{cor:edge} that, for any arc $(u,v)$ in $D$, $uv \notin E(G)$ if and only if $d^+_D(u) = 1$.
Therefore there are at least $mn - (m+2)$ edges in $G$.
By the way, if $n=2$, then $mn - (m+2) = m-2 \ge 1 = {2 \choose 2}$.
If $n \ge 3$, then
\begin{equation*}
mn - (m+2) = m(n-1) -2 \ge n(n-1) -2 \ge {n \choose 2}.
\end{equation*}
Thus $G$ has at least ${n \choose 2}$ edges.
As we have shown $|E(G)| \le {n \choose 2}$, it holds that $|E(G)| = {n \choose 2}$.
\end{proof}

\section*{Acknowledgments}

This research was supported by
the National Research Foundation of Korea (NRF)
grant funded by the Korea government (MEST) (No.~NRF-2015R1A2A2A01006885).
The first author's research was supported by Global Ph.D Fellowship Program
through the National Research Foundation of Korea (NRF)　
funded by the Ministry of Education (No.~NRF-2015H1A2A1033541).



\end{document}